\documentclass[11pt]{amsart}
\usepackage{amssymb,amsthm,eucal,amscd,verbatim}

\textwidth=16cm \textheight=22.5cm \headheight=10pt\headsep=0.5cm
\oddsidemargin=0.00cm \evensidemargin=0.00cm

\theoremstyle{plain}
\newtheorem{Thm}{Theorem}[section]
\newtheorem{Lem}[Thm]{Lemma}
\newtheorem{Cor}[Thm]{Corollary}
\newtheorem{Prop}[Thm]{Proposition}
\theoremstyle{definition}
\newtheorem{Def}[Thm]{Definition}
\theoremstyle{remark}

\DeclareMathOperator{\Ann}{Ann}
\DeclareMathOperator{\Tor}{Tor}
\DeclareMathOperator{\Ext}{Ext}
\DeclareMathOperator{\Supp}{Supp}

\DeclareMathOperator{\Hom}{Hom}
\DeclareMathOperator{\Ker}{Ker}
\DeclareMathOperator{\coker}{coker}

\DeclareMathOperator{\depth}{depth}

\DeclareMathOperator{\lh}{H}

\newcommand{\g}[1]{\mathfrak#1}

\newcommand{\hit}{ht}
\newcommand{\Gid}{Gid}

\newcommand{\Gfd}{Gfd}
\newcommand{\id}{id}

\newcommand{\fd}{fd}
\newcommand{\gr}{grade}
\newcommand{\Spec}{Spec}

\title{G--Gorenstein modules}

\author{mohsen aghajani}

\author{hossein zakeri}

\address{M. Aghajani and H. Zakeri, Faculty of mathematical sciences and
computer engineering, Tarbiat Moallem University, 599 Taleghani
Avenue, Tehran 15618 Iran}

\email{m\_aghajani@tmu.ac.ir}
\email{hossein\_zakeri@yahoo.com}

\subjclass[2000]{13C14,13D25,13H10}

\keywords{balanced big Cohen--Macaulay module, Cousin complex,
dualizing complex, G--Gorenstein module, Gorenstein injective
module}

\begin{document}

\begin{abstract} Let $R$ be a commutative Noetherian ring. In this
paper, we study those finitely generated $R$--modules whose
Cousin complexes provide Gorenstein injective resolutions. We
call such a module a G--Gorenstein module. Characterizations of
G--Gorenstein modules are given and a class of such modules is
determined. It is shown that the class of G--Gorenstein modules
strictly contains the class of Gorenstein modules. Also, we
provide a Gorenstein injective resolution for a balanced big
Cohen--Macaulay $R$--module. Finally, using the notion of a
G--Gorenstein module, we obtain characterizations of Gorenstein
and regular local rings.
\end{abstract}

\maketitle

\section{\textbf{Introduction}}

All rings considered in this paper will be commutative and
Noetherian and will have non-zero identities; $R$ will always
denote such a ring. The Cousin complex is an effective tool in
commutative algebra and algebraic geometry. The commutative
algebra analogue of the Cousin complex of \S 2 of chapter IV of
Hartshorne \cite{H} was introduced by Sharp in \cite{Sha}. Then,
using the Cousin complex, he characterized Cohen--Macaulay and
Gorenstein rings and introduced the Gorenstein modules in
\cite{Shb}. Recall that a non-zero finitely generated $R$--module
$M$ is Gorenstein if the Cousin complex of $M$ with respect to
$M$--height filtration, $C(M)$, is an injective resolution. Note
that Cohen--Macaulay and Gorenstein rings were characterized in
terms of the Cousin complex. In 1967--69, Auslander and Bridger
introduced the concept of G--dimension for finitely generated
$R$--modules. Using this concept, it is proved that the modules
having G--dimension zero are Gorenstein projective. It is
well-known that G--dimension is a refinement of projective
dimension. Finally, in 1993--95, Enochs, Jenda and Torrecillas
extended the idea of Auslander and Bridger in \cite{E-Ja} and
\cite{E-J-T}, and introduced Gorenstein injective, projective and
flat modules (and dimensions), which all have been studied
extensively by their founders and by Christensen, Foxby,
Frankild, Holm and Xu in \cite{Ch}, \cite{Ch-F-F}, \cite{Ch-F-H},
\cite{Ch-H}, \cite{E-J-X}, \cite{Ha} and \cite{Hb}. \par Now we
briefly give some details of our results. In section 2, which
contains preliminaries, we recall some definitions which are
needed in this paper. In section 3, we establish the theory of
G--Gorenstein modules. A Finitely generated $R$--module is
G--Gorenstein if the Cousin complex of $M$ with respect to
$M$--height filtration, $C(M)$, provides a Gorenstein injective
resolution for $M$. Assume for a moment that $R$ admits a
dualizing complex. Then, in 3.3, we obtain a characterization of
G--Gorenstein modules. One can conclude from this result that a
G--Gorenstein module localizes. Also, in 3.6, we prove that a
G--Gorenstein module specializes. Theorem 3.8 determines a class
of G--Gorenstein modules. We describe finitely generated
Gorenstein projective modules by The Cousin complex over
Gorenstein local rings in 3.9. Theorem 3.11 shows that the class
of G--Gorenstein modules strictly contains the class of
Gorenstein modules. Let $R$ be a local ring and let $M$ be a
G--Gorenstein $R$--module of dimension $d$ which $\lh_{\g
m}^d(M)$ is of finite flat dimension; then, Proposition 3.12
shows that $R$ and $M$ are Gorenstein. Next, among other results,
we obtain several characterization of G--Gorenstein modules over
Cohen--Macaulay local rings. \par In section 4, we study the
balanced big Cohen--Macaulay (abbr. bbCM) modules via Cousin
complexes. Firstly, we prove, in 4.2, that if $M$ is a bbCM
$R$--module, then, under certain conditions, the Cousin complex
$C(\mathcal{D}(R),M)$ of $M$ with respect to dimension filtration
provides a Gorenstein injective resolution for $M$. Then we
establish characterizations of regular and Gorenstein local rings
in 4.8 and 4.9. Finally, in 4.10, we study both the structure of
$C(\mathcal{D}(R),M)$ and the injectivity of the top local
cohomology module of $M$ with respect to an ideal, whenever $M$
is a bbCM module over regular local ring.

\section{\textbf{Preliminaries}}

\vspace{0.3cm} In this section, we recall some definitions that we
will use later. The concept of the Cousin complex turns out to be
helpful in the theory of G--Gorenstein modules. Next we recall the
construction of the Cousin complex.
\begin{Def}
(i).\textbf{Filtration}. Following \cite{Sha}, a filtration of
$\Spec(R)$ is a descending sequence $\mathcal{F}=(F_i)_{i\geq 0}$
of subsets of $\Spec(R)$, so that
$$\Spec(R)\supseteq F_0\supseteq F_1\supseteq
F_2\supseteq\cdots\supseteq F_i\supseteq\cdots,$$

with the property that, for every $i\in\mathbb{N}_0$,
each member of $\partial F_i=F_i\backslash{F}_{i+1}$ is a minimal
member of $F_i$ with respect to inclusion. We say that the
filtration $\mathcal{F}$ admits an $R$--module $M$ if $\Supp_R
M\subseteq F_0$.
\par
(ii).\textbf{Cousin complex}. Let $\mathcal{F}=(F_i)_{i\geq 0}$ be a
filtration of $Spec(R)$ which admits an $R$--module $M$. An obvious
modification of the construction given in \S2 of \cite{Sha} will
produce a complex
\begin{center} \vspace{0.3cm}
$0 \to M\xrightarrow{d^{-1}} M^0\xrightarrow{d^0} M^1\to \cdots\to
M^i\xrightarrow{d^i} M^{i+1}\to\cdots$, \vspace{0.1cm}
\end{center}
denoted by $C(\mathcal{F},M)$ and called the Cousin complex for $M$
with respect to $\mathcal{F}$,
such that $M^0=\bigoplus_{{\g p}\in \partial F_0} M_{\g p}$;
\begin{center}
$M^i=\bigoplus_{{\g p} \in \partial F_i} (\coker d^{i-2})_{\g p}$
\end{center} \vspace{0.2cm}
for all $i>0$; the component, for $m\in M$ and $\g p\in \partial
F_0$, of $d^{-1}(m)$ in $M_\g p$ is $m/1$; and, for $i>0$, $x\in
M^{i-1}$ and $\g q\in \partial F_i$, the component of $d^{i-1}(x)$
in $(\coker d^{i-2})_\g q$ is $\pi(x)/1$, where $\pi: M^{i-1} \to
\coker d^{i-2}$ is the canonical epimorphism. \par If $M$ is an
$R$--module, then $\mathcal{H}(M)$ will denote the $M$-height
filtration $(K_i)_{i\geq0}$ of $Spec(R)$, which is defined by
\begin{center}
$K_i=\{\g p\in\Supp_R(M) | \hspace{0.5cm}\hit_M \g p\geq i \}$
\end{center} (for each $i\geq0$).
In this paper, we denote the Cousin complex for $M$ with respect to
$\mathcal{H}(M)$ by $C(M)$. Also, in \S4 we will use
$C(\mathcal{D}(R), M)$ for the Cousin complex of $M$ with respect to
the dimension filtration $\mathcal{D}(R)=(D_i)_{i\geq0}$ of the
spectrum of a local ring $R$, where $D_i$ is defined by
\begin{center}
$D_i=\{\g p\in \Spec(R) |\hspace{0.3cm}
dimR/{\g p} \leq {{dim R}-i} \}$
\end{center} (for all $i\geq0$).
\end{Def}
\begin{Def}
Following \cite{E-Jb}, an $R$--module $N$ is said to be Gorenstein
injective if there exists a $\Hom(\mathcal{I}nj,-)$ exact exact
sequence
\begin{center}\vspace{0.3cm}
$\cdots \to E_1\to E_0\to E^0\to E^1\to \cdots$
\end{center}\vspace{0.3cm}
of injective $R$--modules such that $N=\Ker(E^0\to E^1)$.
We say that an exact sequence
\begin{center}\vspace{0.3cm} $0\to N\to G^0\to G^1\to G^2\to \cdots $
\vspace{0.3cm}\end{center} of $R$--modules is a Gorenstein injective
resolution for $N$, if each $G^i$ is Gorenstein injective. We say
that $\Gid_R N \leq n$ if and only if, $N$ has a Gorenstein
injective resolution of length $n$. If there is no shorter
resolution, we set $\Gid_R N=n$. Dually, an $R$--module $M$ is said
to be Gorenstein flat if there exists an $\mathcal{I}nj\otimes -$
exact exact sequence
\begin{center} \vspace{0.3cm} $\cdots\to F_1\to F_0\to F^0\to F^1\to\cdots$
\end{center}
\vspace{0.3cm} of flat $R$--modules such that $M=\Ker(F^0\to F^1)$.
Similarly, one defines the Gorenstein flat dimension, $\Gfd_R M$, of
$M$. Finally, an $R$--module $M$ is said to be Gorenstein projective
if there is a $\Hom(-,\mathcal{P}roj)$ exact exact sequence
\begin{center} \vspace{0.3cm} $\cdots\to P_1\to P_1\to P^0\to
P^1\cdots$ \vspace{0.3cm} \end{center} of projective $R$--modules
such that $M=\Ker (P^0\to P^1)$.
\end{Def}
\begin{Def}
Following \cite{Shb}, Suppose $M$ is a non-zero finitely generated
$R$--module. Then $M$ is said to be a Gorenstein module if and only
if the Cousin complex for $M$, $C(M)$, provides an injective
resolution for $M$.
\end{Def}
\begin{Def}
Following \cite{Shd}, let $R$ be a local ring and let
$a_1,\ldots,a_d$ be a system of parameters (s.o.p) for $R$. A (not
necessarily finitely generated) $R$--module $M$ is said to be a big
Cohen--Macaulay $R$-module with respect to $a_1,\ldots,a_d$ if
$a_1,\ldots,a_d$ is an $M$--sequence, that is if
$M\neq(a_1,\ldots,a_d)M$ and, for each $i=1,\ldots,d$,
\vspace{0.3cm}\begin{center} $((a_1,\ldots,a_{i-1})M:
a_i)=(a_1,\ldots,a_{i-1})M$. \end{center} \vspace{0.3cm} An
$R$--module $M$ is said to be a balanced big Cohen--Macaulay
$R$--module if  $M$ is big Cohen--Macaulay with respect to every
system of parameters of $R$. If an $R$--module $M$ is a big
Cohen--Macaulay $R$--module with respect to some s.o.p. for $R$ and
$M$ is finitely generated, then it is well known that $M$ is a
balanced big Cohen--Macaulay $R$--module.
\end{Def}
\begin{Def}
Following \cite{X}, an $R$--module $M$ is said to be strongly
torsion free
 if  $\Tor^R_1(F, M)=0$ for any $R$--module $F$
of finite flat dimension. \end{Def}

\section{\textbf{G--Gorenstein modules}}

\vspace{0.3cm} We introduce the following definition.

\begin{Def}
Let $M$ be a non-zero finitely generated $R$-module. We say that $M$
is G--Gorenstein if and only if the Cousin complex for $M$, $C(M)$,
provides a Gorenstein injective resolution for $M$.
\end{Def}

Note that, any Gorenstein module is G--Gorenstein. In the course we
will see that there is a G--Gorenstein module which is not
Gorenstein.

The following lemma is needed in the proof of the next theorem.

\begin{Lem}

Let $S$ be a multiplicative closed subset of $R$. If $M$ is a
Gorenstein injective $S^{-1}R$--module, then $M$ is Gorenstein
injective over $R$.
\end{Lem}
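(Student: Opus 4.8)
The plan is to show that a complete injective resolution of $M$ over $S^{-1}R$ already serves, verbatim, as a complete injective resolution over $R$. Since $M$ is Gorenstein injective over $S^{-1}R$, the definition recalled above furnishes an exact sequence of injective $S^{-1}R$-modules
\[
\mathbf{E}:\ \cdots\to E_1\to E_0\to E^0\to E^1\to\cdots
\]
with $M=\Ker(E^0\to E^1)$, which remains exact after applying $\Hom_{S^{-1}R}(J,-)$ for every injective $S^{-1}R$-module $J$. I would argue that $\mathbf{E}$, regarded as a complex of $R$-modules, satisfies the three conditions required for $M$ to be Gorenstein injective over $R$.

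Two of those conditions are almost immediate. Exactness of $\mathbf{E}$ is a statement about abelian-group homomorphisms and hence is independent of the ring over which we view the terms; likewise the identity $M=\Ker(E^0\to E^1)$ is unaffected by restriction of scalars. Moreover each term of $\mathbf{E}$ is injective over $R$: because the localization map $R\to S^{-1}R$ is flat, there is a natural isomorphism $\Hom_R(-,E)\cong\Hom_{S^{-1}R}(S^{-1}R\otimes_R-,E)$, exhibiting $\Hom_R(-,E)$ as the composite of the exact functor $S^{-1}R\otimes_R-$ with the exact functor $\Hom_{S^{-1}R}(-,E)$. This composite is exact, so every injective $S^{-1}R$-module $E$ is injective as an $R$-module.

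The substance of the proof, and the step I expect to be the main obstacle, is checking that $\mathbf{E}$ is $\Hom_R(\mathcal{I}nj,-)$-exact, that is, that $\Hom_R(I,\mathbf{E})$ is exact for every injective $R$-module $I$. Here I would combine two ring-change ingredients. The first is the extension/restriction adjunction for $R\to S^{-1}R$, giving a natural isomorphism $\Hom_R(I,\mathbf{E})\cong\Hom_{S^{-1}R}(S^{-1}I,\mathbf{E})$, where $S^{-1}I=S^{-1}R\otimes_R I$; naturality is exactly what guarantees compatibility with the differentials, so this is an isomorphism of complexes and not merely of the individual terms. The second is that, $R$ being Noetherian, the localization $S^{-1}I$ of an injective $R$-module $I$ is again injective over $S^{-1}R$. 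Granting these, $S^{-1}I$ is an admissible test module for the complete resolution $\mathbf{E}$ over $S^{-1}R$, whence $\Hom_{S^{-1}R}(S^{-1}I,\mathbf{E})$ is exact, and therefore $\Hom_R(I,\mathbf{E})$ is exact as well.

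Assembling these observations verifies all three defining conditions, so $\mathbf{E}$ is a complete injective resolution of $M$ over $R$ and $M$ is Gorenstein injective over $R$. The delicate points to state carefully are precisely the two change-of-ring facts: that localization preserves injectivity (where Noetherianness is genuinely needed) and that the Hom-adjunction converts $R$-linear test maps into $S^{-1}R$-linear ones. Everything else in the argument is formal, which is why I expect the injectivity-of-$S^{-1}I$ step to carry the real weight.
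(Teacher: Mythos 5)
Your proposal is correct and follows essentially the same route as the paper: the adjunction $\Hom_R(I,-)\cong\Hom_{S^{-1}R}(S^{-1}I,-)$ on $S^{-1}R$-modules together with the facts that $S^{-1}R$-injectives restrict to $R$-injectives and that (by Noetherianness) $S^{-1}I$ is injective over $S^{-1}R$. You merely spell out the last ingredient, which the paper leaves implicit.
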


\begin{proof}
 For a given injective $R$-module $E$, it is immediate to see that
 the functors ${\Hom_R(E ,-)}$ and ${Hom_{S^{-1}R}(S^{-1}E ,-)}$ are
 equivalent on the category of $S^{-1}R$--modules.
 Therefore, since every $S^{-1}R$--injective module is $R$--injective,
 the assertion follows immediately from the definition of a
 Gorenstein injective module.
 \end{proof}

 The following theorem provides a characterization of G--Gorenstein modules.

\begin{Thm}
Suppose that $R$ admits a dualizing complex and that $M$ is a
non-zero finitely generated $R$-module. Then the following
conditions are equivalent.
\begin{itemize}
\item[(i)] $M$ is G--Gorenstein.

\item[(ii)] $\depth_{R_\g p} M_\g p=\hit_M \g p=\Gid_{R_\g p} M_\g p=
\depth{R_\g p}$, for all $\g p\in\Supp_R M$.
\end{itemize}
\end{Thm}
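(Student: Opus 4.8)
The plan is to prove the two implications separately, in both cases passing to the localizations $M_\g p$ and exploiting the fact that the Cousin complex commutes with localization: for $\g p\in\Supp_R M$ one has $(C(M))_\g p\cong C(M_\g p)$ as complexes of $R_\g p$--modules, a complex of length $\hit_M\g p$ whose augmentation is exact precisely when $M_\g p$ is Cohen--Macaulay. I will use three external inputs throughout. First, Sharp's theorem that $C(M)$ is exact if and only if $M$ is Cohen--Macaulay, so that the requirement $\depth_{R_\g p}M_\g p=\hit_M\g p$ for all $\g p\in\Supp_R M$ is equivalent to exactness of $C(M)$. Second, the Bass formula for Gorenstein injective dimension: a non-zero finitely generated module $N$ over a local ring $R$ with $\Gid_R N<\infty$ satisfies $\Gid_R N=\depth R$. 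Third, the fact that, over a ring admitting a dualizing complex, the class of Gorenstein injective modules is closed under localization and under arbitrary direct sums. Lemma 3.2 will be invoked repeatedly to transfer Gorenstein injectivity from a localization $R_\g p$ back to $R$.

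For $(ii)\Rightarrow(i)$ I would argue by induction on $i=\hit_M\g p$ that each summand $(\coker d^{i-2})_\g p\cong\lh^{i}_{\g p R_\g p}(M_\g p)$ occurring in $M^i$ is Gorenstein injective. Since $\depth_{R_\g p}M_\g p=\hit_M\g p$ for every $\g p$, the module $M$ is Cohen--Macaulay and $C(M)$ is exact, so $(C(M))_\g p$ is the exact complex $0\to M_\g p\to(M^0)_\g p\to\cdots\to(M^i)_\g p\to0$. For the base case $i=0$ the hypothesis gives $\Gid_{R_\g p}M_\g p=\depth R_\g p=0$, so $M_\g p=(M^0)_\g p$ is itself Gorenstein injective. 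For the inductive step, the lower terms $(M^j)_\g p$ with $j<i$ are direct sums of modules $\lh^{j}_{\g q R_\g q}(M_\g q)$ with $\g q\subsetneq\g p$, which are Gorenstein injective over $R_\g q$ by the inductive hypothesis and hence over $R_\g p$ by Lemma 3.2; then, since $\Gid_{R_\g p}M_\g p=\depth R_\g p=i$ and the first $i$ cosyzygy terms are Gorenstein injective, the standard dimension-shifting property of Gorenstein injective dimension forces the final cokernel $(M^i)_\g p=\lh^{i}_{\g p R_\g p}(M_\g p)$ to be Gorenstein injective. Reassembling, $M^i=\bigoplus_{\hit_M\g p=i}\lh^{i}_{\g p R_\g p}(M_\g p)$ is a direct sum of $R$--Gorenstein injective modules, hence Gorenstein injective, so $C(M)$ is a Gorenstein injective resolution and $M$ is G--Gorenstein.

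For $(i)\Rightarrow(ii)$ I would start from the observation that a Gorenstein injective resolution is in particular exact, so exactness of $C(M)$ gives $M$ Cohen--Macaulay and thus $\depth_{R_\g p}M_\g p=\hit_M\g p$ for every $\g p$. Localizing $C(M)$ at $\g p$ and using that localization preserves Gorenstein injectivity, the finite complex $(C(M))_\g p$ becomes a Gorenstein injective resolution of $M_\g p$ of length $\hit_M\g p$; in particular $\Gid_{R_\g p}M_\g p<\infty$, whence the Bass formula yields $\Gid_{R_\g p}M_\g p=\depth R_\g p$. It remains to identify this common value with $\hit_M\g p$. Since $\Gid_{R_\g p}M_\g p\le\hit_M\g p$ by the length of the resolution, one already has $\depth R_\g p\le\hit_M\g p$; for the reverse inequality I would examine the non-zero top term $(M^{\hit_M\g p})_\g p\cong\lh^{\hit_M\g p}_{\g p R_\g p}(M_\g p)$, applying local duality over the dualizing complex of $R_\g p$ together with the Gorenstein injectivity of this top local cohomology to deduce $\depth R_\g p\ge\hit_M\g p$. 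Combining the inequalities closes the chain $\depth_{R_\g p}M_\g p=\hit_M\g p=\Gid_{R_\g p}M_\g p=\depth R_\g p$.

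I expect the main obstacle to be exactly this last point in $(i)\Rightarrow(ii)$, the inequality $\hit_M\g p\le\depth R_\g p$: a non-minimal Gorenstein injective resolution may a priori be longer than the Gorenstein injective dimension, so the mere length of the Cousin complex does not force the equality. The leverage must come from the specific structure of the top term as the non-zero top local cohomology $\lh^{\hit_M\g p}_{\g p R_\g p}(M_\g p)$ of a Cohen--Macaulay module, whose Gorenstein injectivity one converts, through local duality, into the statement that $R_\g p$ has depth equal to $\hit_M\g p$. The remaining steps are comparatively routine once the localization property of the Cousin complex, the identification of its terms with top local cohomologies, and the Bass formula are in place.
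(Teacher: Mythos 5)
Your proof of (ii)$\Rightarrow$(i) is essentially the paper's own argument: exactness of $C(M)$ from Cohen--Macaulayness, induction on $\hit_M\g p$, dimension shifting via \cite[3.3]{Ch-F-H} against the hypothesis $\Gid_{R_\g p}M_\g p=\hit_M\g p$, transfer of Gorenstein injectivity from localizations back to the base ring by Lemma 3.2, and closure of Gorenstein injectives under direct sums. That half is correct.

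The genuine gap is in (i)$\Rightarrow$(ii), at exactly the point you flag as the expected obstacle: the inequality $\hit_M\g p\leq\depth R_\g p$. From the localized Cousin complex you get $\Gid_{R_\g p}M_\g p\leq t:=\hit_M\g p$, and the Bass formula \cite[6.3]{Ch-F-H} gives $\Gid_{R_\g p}M_\g p=\depth R_\g p$; but for the reverse inequality you offer only the phrase ``applying local duality together with the Gorenstein injectivity of this top local cohomology,'' with no actual mechanism. This is not a formal consequence of the data you have assembled: unlike injective modules, Gorenstein injective modules do admit proper essential extensions with Gorenstein injective quotients (for instance $k\subset k[x]/(x^2)$ over the ring $k[x]/(x^2)$, where every module is Gorenstein injective), so nothing stated in your outline excludes the possibility that the localized Cousin complex is a Gorenstein injective resolution strictly longer than $\Gid_{R_\g p}M_\g p$, i.e.\ that $\depth R_\g p<t$. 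The paper closes precisely this gap with a different and specific tool: by \cite[3.5]{Sha} the localized complex is $C_{R_\g p}(M_\g p)$; by \cite[6.1.4]{B-Sh} and the main theorem of \cite{Shc} its top term is $\lh^t_{\g pR_\g p}(M_\g p)\neq0$; and by \cite[5.3]{Sha} the Cousin complex is an \emph{essential} complex. It is this essentiality --- a minimality property of the Cousin complex as a resolution --- combined with the nonvanishing of the top term, that yields the exact equality $\Gid_{R_\g p}M_\g p=t$, after which \cite[6.3]{Ch-F-H} identifies $t$ with $\depth R_\g p$. So the missing idea is Sharp's essentiality theorem (or some equally effective lower bound for $\Gid_{R_\g p}M_\g p$); local duality, as invoked in your sketch, is neither the paper's route nor, as written, an argument.
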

\begin{proof}
Write C(M) as
\begin{center}
$ 0\to M\xrightarrow{d^{-1}} M^0\xrightarrow{d^{0}}
M^1\to\cdots\to M^n\xrightarrow{d^{n}} M^{n+1}\to\cdots$.
\end{center} \vspace{0.2cm} \par (i)$\Rightarrow$(ii). In view of
\cite[2.4]{Shb}, $M$ is Cohen--Macaulay; so that $\depth_{R_ \g p}
M_\g p=\hit_M\g p$ for all $\g p\in\Supp_R M$. Therefore, by
\cite[6.1.4]{B-Sh} and the main theorem of \cite{Shc}, $\,(M_\g
p)^t\cong{\lh}_{\g pR_\g p}^t(M_\g p)\neq 0\,$, where $\,t={\hit_M
\g p}\,$. Next, since ,for all $\g p\in\Supp_R M$, $\,[C_R(M)]_\g
p \cong C_{R_\g p}(M_{\g p})\,$ by \cite[3.5]{Sha} and $\,C_{R_\g
p}(M_{\g p})\,$ is an essential complex by \cite[5.3]{Sha}, we
have $\,\Gid_{R_\g p} M_\g p=t\,$ for all $\g p\in\Supp_R M$.
Therefore, by \cite[6.3]{Ch-F-H}, $\,\Gid_{R_\g p} M_\g p=\depth
R_\g p\,$, which completes the proof.
\par
(ii)$\Rightarrow$(i). Let $\,\g p\in\Supp_R M\,$. Then, by
hypothesis, $M$ is Cohen--Macaulay; so that, by \cite[2.4]{Shb},
$\,C(M)\,$ is exact. It remains to show that $M^n$ is Gorenstein
injective for all $n\geq 0$. We prove this by induction on $n$.
If $n=0$, then $\,\Gid_{R_\g p} M_\g p=0\,$ for all $\,\g
p\in\Supp_R M\,$ with $\,\hit_M\g p=0\,$; so that, by 3.2,
$\,M_\g p\,$ is a Gorenstein injective $R$--module for all $\,\g
p\in\Supp_R M\,$ with $\,\hit_M\g p=0\,$. Hence by
\cite[10.1.4]{E-Jb}, $M^0$ is Gorenstein injective. Now, assume
that $n>0$ and that $M^0$, $M^1$,\ldots, $M^{n-1}$ are Gorenstein
injective. We have the exact sequence \vspace{0.3cm}
\begin{center}
$ 0\to M\to M^0\to M^1\to\cdots\to M^{n-1}\to \coker d^{n-2}\to 0$.
\end{center} \vspace{0.3cm}
Let $\g p\in\Supp_R M$ with $\hit_M\g p=n$. Since $\Gid_{R_\g p}
M_\g p=n$ and the sequence
\begin{center} \vspace{0.2cm}
$ 0\to M_\g p\to (M^0)_\g p\to (M^1)_\g p\to\cdots\to (M^{n-1})_\g
p\to (\coker d^{n-2})_\g p\to 0$
\end{center} \vspace{0.3cm}
is exact, we deduce, by \cite[3.3]{Ch-F-H} and 3.2, that $(\coker
d^{n-2})_\g p$ is a Gorenstein injective $R$--module. Hence, by
\cite[6.9]{Ch-F-H}, $M^n=\bigoplus_{\hit_M\g p=n} (\coker
d^{n-2})_\g p$ is Gorenstein injective. This completes the inductive
step.
\end{proof}

\begin{Cor}
Suppose that $R$ admits a dualizing complex and that $M$ is a
non--zero finitely generated $R$--module. Then the following
conditions are equivalent.
\begin{itemize}
\item[(i)] $M$ is G--Gorenstein.
\item[(ii)] $M_\g p$ is a G--Gorenstein $R_\g p$--module for
all $\g p\in\Supp_R M$.
\item[(iii)] $M_\g m$ is a G--Gorenstein $R_\g m$--module for
all maximal $\g m\in\Supp_R M$.
\end{itemize}
\end{Cor}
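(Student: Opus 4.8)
The plan is to deduce all three implications from the characterization in Theorem 3.3, exploiting the fact that its condition (ii) is a pointwise statement whose four invariants are compatible with iterated localization. First I would record the following observation: if $\g q,\g p\in\Supp_R M$ with $\g q\subseteq\g p$, then under the canonical identifications $(R_\g p)_{\g qR_\g p}\cong R_\g q$ and $(M_\g p)_{\g qR_\g p}\cong M_\g q$ one has
$$\depth_{(R_\g p)_{\g qR_\g p}}(M_\g p)_{\g qR_\g p}=\depth_{R_\g q}M_\g q,\qquad \Gid_{(R_\g p)_{\g qR_\g p}}(M_\g p)_{\g qR_\g p}=\Gid_{R_\g q}M_\g q,$$
together with $\depth(R_\g p)_{\g qR_\g p}=\depth R_\g q$, and the $M_\g p$-height of $\g qR_\g p$ equals $\hit_M\g q$. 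Since $R$ admits a dualizing complex, so does each localization $R_\g p$, so that Theorem 3.3 applies equally to $M_\g p$ over $R_\g p$.

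For (i)$\Rightarrow$(ii) I would fix $\g p\in\Supp_R M$ and invoke Theorem 3.3 to get the four-term equality at every $\g q\in\Supp_R M$. Restricting to the primes $\g q\subseteq\g p$ and translating through the identities above shows that $M_\g p$ satisfies condition (ii) of Theorem 3.3 at every member of $\Supp_{R_\g p}M_\g p=\{\g qR_\g p:\g q\in\Supp_R M,\ \g q\subseteq\g p\}$; hence $M_\g p$ is G--Gorenstein over $R_\g p$.

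The implication (ii)$\Rightarrow$(iii) is immediate, since each maximal member of $\Supp_R M$ is in particular a prime of $\Supp_R M$. For (iii)$\Rightarrow$(i) I would take an arbitrary $\g p\in\Supp_R M$ and choose a maximal ideal $\g m\supseteq\g p$; as $M$ is finitely generated, $\Supp_R M=\V(\Ann_R M)$ is closed under specialization, so $\g m\in\Supp_R M$ and $M_\g m$ is G--Gorenstein. Applying Theorem 3.3 to $M_\g m$ at the prime $\g pR_\g m\in\Supp_{R_\g m}M_\g m$ and reading the identities backwards yields $\depth_{R_\g p}M_\g p=\hit_M\g p=\Gid_{R_\g p}M_\g p=\depth R_\g p$; since $\g p$ was arbitrary, a final appeal to Theorem 3.3 gives that $M$ is G--Gorenstein.

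I expect the only delicate point to be the compatibility of the four invariants with the passage from $R_\g p$ to its further localization $(R_\g p)_{\g qR_\g p}$. The depth and Gorenstein injective dimension identities are formal once one recognizes these as the canonical ring and module isomorphisms onto $R_\g q$ and $M_\g q$, while the height identity requires only the remark that no chain of support primes below $\g q$ is lost under localization at a prime $\g p\supseteq\g q$. Beyond this bookkeeping I anticipate no substantive obstacle.
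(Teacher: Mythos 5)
Your proposal is correct and follows essentially the same route as the paper: the paper also regards (i)$\Rightarrow$(ii) and (ii)$\Rightarrow$(iii) as consequences of the pointwise characterization in Theorem 3.3, and proves (iii)$\Rightarrow$(i) exactly as you do, by choosing a maximal ideal $\g m\supseteq\g p$ and transporting the four-term equality through the natural isomorphism $(M_\g m)_{\g pR_\g m}\cong M_\g p$. Your write-up merely makes explicit the bookkeeping (compatibility of depth, height and $\Gid$ with iterated localization, and the localization of the dualizing complex) that the paper leaves implicit.
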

\begin{proof}
The only non--obvious point is (iii)$\Rightarrow$(i). To this end,
let $\g p\in\Supp_R M$ and $\g m$ be a maximal ideal of $R$ which
contains $\g p$. Since $M_\g m$ is a G--Gorenstein $R_\g m$--module,
one can use 3.3 and the natural isomorphism $(M_\g m)_{\g p R_\g
m}\cong M_\g p$ to deduce that $M$ is G--Gorenstein.
\end{proof}

The following proposition, establishes
a property of G--Gorenstein modules.
\begin{Prop}
Suppose that $R$ admits a dualizing complex and that $M$ is a
non--zero finitely generated G--Gorenstein $R$--module. Then, for
every finitely generated $R$--module $N$ of finite injective or
projective dimension,
\begin{center}   $\Ext_R^i(\Ext_R^j(N, M), M)=0$
\end{center}
for all integers  $i,j$  with  $0\leq i<j$.
\end{Prop}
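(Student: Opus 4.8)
The plan is to use the characterization of G–Gorenstein modules from Theorem 3.3 to reduce the vanishing of the iterated Ext modules to a local computation, and then invoke the behaviour of Ext under finite injective/projective dimension together with the Cohen–Macaulay and Gorenstein-injective-dimension structure that a G–Gorenstein module enjoys. Since $R$ admits a dualizing complex and $M$ is G–Gorenstein, Theorem 3.3 tells me that for every $\g p\in\Supp_R M$ we have
$$\depth_{R_\g p} M_\g p=\hit_M\g p=\Gid_{R_\g p}M_\g p=\depth R_\g p.$$
In particular $M$ is Cohen–Macaulay and $\Gid_{R_\g p}M_\g p$ is finite for all $\g p$, so $\Gid_R M<\infty$. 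The strategy is to show that $\Ext_R^j(N,M)$, for $N$ of finite injective or projective dimension, is again a module whose support and local Gorenstein-injective data are controlled, and then apply a grade/depth inequality for Ext.

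First I would localize. Because all the quantities in question behave well under localization (by Corollary 3.4 the module $M_\g p$ is G–Gorenstein over $R_\g p$, and $\Ext$ commutes with localization for finitely generated modules), it suffices to prove the statement when $R$ is local and to track the $\depth$/$\Gid$ invariants prime by prime. Next I would analyze $\Ext_R^j(N,M)$. The key point is that finite injective or projective dimension of $N$ forces $\Ext_R^j(N,M)$ to vanish except in a controlled range, and more importantly forces a lower bound on the grade (or a depth estimate) of these Ext modules relative to $M$. Concretely, for each associated prime $\g p$ of $\Ext_R^j(N,M)$ one expects $\hit_M\g p\ge j$, because the $j$-th Ext can only acquire new associated primes at height at least $j$ in the $M$–height filtration. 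This is the heart of the argument.

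Then I would feed this support/grade information back into an Ext-vanishing criterion for G–Gorenstein modules. Since $M$ is Cohen–Macaulay with $\Gid_{R_\g p}M_\g p=\hit_M\g p$ at every prime, the module $M$ behaves like a (Gorenstein-injective analogue of a) maximal Cohen–Macaulay module, and one has a vanishing statement of the form: $\Ext_R^i(L,M)=0$ whenever $i<\gr(\Ann L,M)$, i.e. whenever $i$ is below the common value of the local depths along $\Supp L$. Applying this with $L=\Ext_R^j(N,M)$, whose support lies in $\{\g p:\hit_M\g p\ge j\}$, gives $\gr(\Ann L,M)\ge j$, and hence $\Ext_R^i(L,M)=0$ for all $i<j$, which is exactly the desired conclusion for $0\le i<j$.

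The hard part will be rigorously establishing the grade/support estimate for $\Ext_R^j(N,M)$, namely that every associated (or support) prime of the $j$-th Ext has $M$–height at least $j$; this is where the finiteness of $\id N$ or $\pd N$ must be used essentially, presumably via the Bass/Auslander–Buchsbaum-type formulas and the fact that over a ring with dualizing complex finite injective dimension is detected by the dualizing functor. A subtle point to check is that the relevant vanishing criterion, $\Ext_R^i(L,M)=0$ for $i<\gr(\Ann L,M)$, really does hold in the Gorenstein-injective setting rather than only for genuinely injective $M$; I expect this to follow from the local characterization $\Gid_{R_\g p}M_\g p=\depth R_\g p$ together with the standard properties of Gorenstein injective dimension (e.g. the results of Christensen–Frankild–Holm cited as \cite{Ch-F-H}), but it will need to be argued carefully rather than quoted verbatim.
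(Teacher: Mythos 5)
Your overall skeleton matches the paper's proof: both arguments reduce the claim to showing that $E=\Ext_R^j(N,M)$ satisfies $\gr(\Ann_R E,M)\geq j$, and both then conclude by the standard grade sensitivity of Ext, namely $\Ext_R^i(L,M)=0$ for $i<\gr(\Ann_R L,M)$ (this is \cite[1.2.10]{B-H}; your worry that it might hold only for genuinely injective $M$ is unfounded, since it is a general fact about finitely generated modules and uses no injectivity at all). The further reduction to proving $E_{\g p}=0$ for every $\g p\in\Supp_R M$ with $\hit_M\g p<j$ is also exactly what the paper does, using that $M$ is Cohen--Macaulay so that $\gr(\g a,M)=\hit_M\g a$ for $\g a=\Ann_R E$.

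However, the step you yourself call ``the heart of the argument'' --- that $E_{\g p}=0$ whenever $\hit_M\g p<j$ --- is left unproved, and the route you gesture at (Bass/Auslander--Buchsbaum type formulas, detection of finite injective dimension by the dualizing functor, or a general principle that the $j$-th Ext only acquires associated primes of $M$-height at least $j$) is not the mechanism that works; the last assertion is not true without the finiteness hypotheses on $N$ and is precisely what needs proof. The missing idea is the upper vanishing bound attached to finite Gorenstein injective dimension: by Holm \cite[2.22]{Hb}, if $\Gid_{R_{\g p}}M_{\g p}=n<\infty$, then $\Ext^i_{R_{\g p}}(L,M_{\g p})=0$ for all $i>n$ and all modules $L$ of finite injective or finite projective dimension. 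Since Theorem 3.3 (via Corollary 3.4) gives $\Gid_{R_{\g p}}M_{\g p}=\hit_M\g p<j$, and $N_{\g p}$ still has finite injective (resp.\ projective) dimension over $R_{\g p}$, localization of Ext for finitely generated modules \cite[19.1]{M} gives $E_{\g p}\cong\Ext^j_{R_{\g p}}(N_{\g p},M_{\g p})=0$ at once. Without this input your argument does not close.
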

\begin{proof}
Since $M$ is G--Gorenstein, $C(M)$ provides a Gorenstein injective
resolution for $M$; and hence $M$ is Cohen--Macaulay by
\cite[2.4]{Shb}. Suppose that $j\geq0$ and that $N$ is a finitely
generated $R$--module of finite injective or projective dimension
with $E=\Ext_R^j(N, M)\neq 0$. Let $\g a=\Ann_R E$. Then by
\cite[1.2.10]{B-H}, it is sufficient to show that $\gr (\g a,
M)=\hit_M \g a\geq j$. To this end, it is enough to prove that $E_\g
p=0$ for all $\g p\in\Supp_R M$ with $\hit_M\g p<j$. Since $N$ is
finitely generated and by 3.4, $M_\g p$ is a G--Gorenstein $R_\g
p$--module with $\Gid_{R_\g p} M_\g p=\hit_M\g p<j$, it follows, in
view of \cite[2.22]{Hb} and \cite[19.1]{M}, that $E_\g p=0$, as
required.
\end{proof}
\begin{Thm}
Suppose that $R$ is a ring which admits a dualizing complex and that
$M$ is a G--Gorenstein $R$--module. Suppose, also, that
$x=(x_1,\ldots,x_n)$ is both an $M$--sequence and an $R$--sequence.
Then the $R/xR$--module $M/xM$ is G--Gorenstein.
\end{Thm}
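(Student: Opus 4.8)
The plan is to derive the conclusion from the numerical characterisation in Theorem 3.3 rather than from the Cousin complex directly. First I would record that $\bar R:=R/xR$ again admits a dualizing complex: if $D$ is a dualizing complex for $R$, then $\mathbf{R}\mathrm{Hom}_R(\bar R,D)$ is one for $\bar R$. Hence Theorem 3.3 is available for $\bar R$ and the non-zero finitely generated $\bar R$-module $\bar M:=M/xM$, and it suffices to verify, for every $\bar{\g p}\in\Supp_{\bar R}\bar M$, the equalities
\[ \depth_{\bar R_{\bar{\g p}}}\bar M_{\bar{\g p}}=\hit_{\bar M}\bar{\g p}=\Gid_{\bar R_{\bar{\g p}}}\bar M_{\bar{\g p}}=\depth \bar R_{\bar{\g p}}. \]
Writing $\bar{\g p}=\g p/xR$, such primes correspond exactly to the $\g p\in\Supp_R M$ with $\g p\supseteq xR$, equivalently $\g p\in\Supp_R(M/xM)$, and there are natural identifications $\bar R_{\bar{\g p}}\cong R_{\g p}/xR_{\g p}$ and $\bar M_{\bar{\g p}}\cong M_{\g p}/xM_{\g p}$. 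Since $(M/xM)_{\g p}\neq0$ and $R_{\g p}/xR_{\g p}\neq0$, the sequence $x$ remains both an $M_{\g p}$-sequence and an $R_{\g p}$-sequence.

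Next I would dispose of three of the four equalities using the standard behaviour of regular sequences (cf. \cite{B-H}). Reducing modulo an $M_{\g p}$- (resp.\ $R_{\g p}$-) regular sequence of length $n$ lowers depth by $n$, so $\depth_{\bar R_{\bar{\g p}}}\bar M_{\bar{\g p}}=\depth_{R_{\g p}} M_{\g p}-n$ and $\depth \bar R_{\bar{\g p}}=\depth R_{\g p}-n$; moreover each $x_i$ avoids the minimal primes of the relevant support, whence $\hit_{\bar M}\bar{\g p}=\dim \bar M_{\bar{\g p}}=\dim M_{\g p}-n=\hit_M\g p-n$. Because $M$ is G--Gorenstein, Theorem 3.3 applied to $M$ gives $\depth_{R_{\g p}} M_{\g p}=\hit_M\g p=\depth R_{\g p}$, and subtracting $n$ yields $\depth_{\bar R_{\bar{\g p}}}\bar M_{\bar{\g p}}=\hit_{\bar M}\bar{\g p}=\depth \bar R_{\bar{\g p}}$. (Equivalently, $\bar M$ is maximal Cohen--Macaulay over $\bar R$, being the quotient of the Cohen--Macaulay module $M$ by an $M$-regular sequence.)

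The remaining equality, involving the Gorenstein injective dimension, is the only genuine obstacle. From Theorem 3.3 we already know $\Gid_{R_{\g p}} M_{\g p}=\depth R_{\g p}<\infty$, and the input I need is a change-of-rings statement: if $x$ is both $R_{\g p}$- and $M_{\g p}$-regular and $\Gid_{R_{\g p}} M_{\g p}<\infty$, then $\Gid_{R_{\g p}/xR_{\g p}}(M_{\g p}/xM_{\g p})<\infty$. I expect to obtain this from the theory of the Bass class: over a local ring with dualizing complex a finitely generated module has finite Gorenstein injective dimension precisely when it lies in the Bass class, and this class is stable under reduction modulo a regular element (the dualizing complex of $R/xR$ being $\mathbf{R}\mathrm{Hom}_R(R/xR,D)$); see \cite{Ch} and \cite{Hb}. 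Granting finiteness, the Bass formula for Gorenstein injective dimension \cite[6.3]{Ch-F-H} forces $\Gid_{\bar R_{\bar{\g p}}}\bar M_{\bar{\g p}}=\depth \bar R_{\bar{\g p}}$, since $\bar M_{\bar{\g p}}\neq0$.

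Assembling the four equalities for every $\bar{\g p}\in\Supp_{\bar R}\bar M$ and invoking the implication (ii)$\Rightarrow$(i) of Theorem 3.3 over $\bar R$ shows that $M/xM$ is a G--Gorenstein $\bar R$-module. The step I would scrutinise most carefully is the transfer of \emph{finiteness} of Gorenstein injective dimension under the quotient; the depth, dimension and localisation bookkeeping is routine given 3.2--3.4.
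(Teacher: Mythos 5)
Your proposal is correct and follows essentially the same route as the paper: both verify condition (ii) of Theorem 3.3 for $M/xM$ over $R/xR$, handle the depth and height equalities by the standard behaviour of regular sequences, transfer finiteness of the Gorenstein injective dimension by a change-of-rings argument, and then invoke the Bass-type formula \cite[6.3]{Ch-F-H}. The only cosmetic difference is that the paper reduces to $n=1$ and passes finiteness of $\Gid$ through the short exact sequence $0\to M\xrightarrow{x_1}M\to M/x_1M\to 0$ together with \cite[2.25]{Hb}, \cite[11.69]{R} and \cite[2.8]{Ch-H} before localizing, whereas you localize first and phrase the same transfer via stability of the Bass class.
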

\begin{proof}
It is sufficient to prove this when $n=1$. Put $\bar{M}=M/x_1M$ and
$\bar{R}=R/x_1R$. Let $\g p\in\Supp_R M/x_1M$ and let $\bar{\g p}=\g
p/x_1R$. Since $M$ is G--Gorenstein, we can see, in view of 3.3,
that \par \hspace{5.2cm} $\depth {\bar{R}_{\bar {\g
p}}}=\depth_{\bar{R}_{\bar {\g p}}} {\bar{M}_{\bar {\g p}}}$.
\hspace{5.2cm} \vspace{0.2cm} $(\ast)$ On the other hand, since
$\Gid_R M<\infty$, one can use the exact sequence
\begin{center}
$0\to M\xrightarrow{x_1}\ M\to M/x_1M\to 0$
\end{center}
to see, in view of \cite[2.25]{Hb}, that $\Gid_R {\bar{M}}<\infty$;
and so we have $\Gid_{\bar{R}} {\bar{M}}<\infty$ by \cite[11.69]{R}
and \cite[2.8]{Ch-H}. Thus we have $\Gid_{\bar{R}_{\bar {\g p}}}
{\bar{M}_{\bar {\g p}}}<\infty$ by \cite[5.5]{Ch-F-H}; and hence
$\Gid_{\bar{R}_{\bar {\g p}}} {\bar{M}_{\bar{\g p}}}=\depth
{\bar{R}_{\bar{\g p}}}$ by \cite[6.3]{Ch-F-H}. Therefore, since $M$
is Cohen--Macaulay, we conclude by $(\ast)$, that
\begin{center}
$\depth_{\bar{R}_{\bar{\g p}}} {\bar{M}_{\bar{\g p}}}=\hit_{\bar{M}}
{\bar{\g p}}=\Gid_{\bar{R}_{\bar{\g p}}} {\bar{M}_{\bar{\g
p}}}=\depth {\bar{R}_{\bar{\g p}}}$
\end{center}
for all $\bar{\g p}\in\Supp_{\bar{R}} (\bar{M})$. Now, the assertion
follows immediately from 3.3.
\end{proof}
The following corollary is immediate by \cite[1.7]{Shb} and the
above theorem.
\begin{Cor}
Let $R$ and $M$ be as in the above theorem. If $x=(x_1,\cdots,x_n)$
is maximal with respect to the property of being both an
$M$--sequence and an $R$--sequence, then $M/xM$ is a Gorenstein
injective $R/xR$--module.
\end{Cor}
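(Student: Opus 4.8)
The plan is to combine Theorem 3.6 with \cite[1.7]{Shb} and to reduce everything to showing that the Gorenstein injective dimension of $\bar M:=M/xM$ over $\bar R:=R/xR$ is zero. By Theorem 3.6, $\bar M$ is a G--Gorenstein $\bar R$--module; hence, by \cite[2.4]{Shb}, $\bar M$ is Cohen--Macaulay, and, by 3.3, for every $\bar{\g p}\in\Supp_{\bar R}\bar M$ the four invariants
$$\depth_{\bar R_{\bar{\g p}}}\bar M_{\bar{\g p}}=\hit_{\bar M}\bar{\g p}=\Gid_{\bar R_{\bar{\g p}}}\bar M_{\bar{\g p}}=\depth\bar R_{\bar{\g p}}$$
coincide. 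Since a non--zero module is Gorenstein injective exactly when its Gorenstein injective dimension vanishes, it suffices to prove that $\Gid_{\bar R}\bar M=0$. As the notion of a maximal combined $M$-- and $R$--sequence is only meaningful when the ambient ring is local, I work with $R$ local, say with maximal ideal $\g m$; then $\bar R$ is local with maximal ideal $\bar{\g m}=\g m/xR$, and evaluating the displayed equalities at $\bar{\g m}$ reduces the task to proving $\depth_{\bar R}\bar M=0$ or $\depth\bar R=0$.

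This last point is precisely the content I attribute to \cite[1.7]{Shb}. The maximality of $x$ means that it cannot be prolonged to a strictly longer sequence that is simultaneously an $M$--sequence and an $R$--sequence; equivalently, no element of $\bar{\g m}$ is a non--zerodivisor on both $\bar M$ and $\bar R$. Thus $\bar{\g m}$ is contained in the union of the associated primes of $\bar M$ and of $\bar R$, so by prime avoidance $\bar{\g m}\in\Ass_{\bar R}\bar M\cup\Ass_{\bar R}\bar R$; that is, $\depth_{\bar R}\bar M=0$ or $\depth\bar R=0$. Feeding either equality into the fourfold equality of 3.3 forces $\Gid_{\bar R}\bar M=0$, whence $\bar M$ is Gorenstein injective over $\bar R$. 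Concretely, $\depth_{\bar R}\bar M=0$ together with Cohen--Macaulayness gives $\dim_{\bar R}\bar M=0$, so the $\bar M$--height filtration is trivial, $C(\bar M)$ collapses to $0\to\bar M\xrightarrow{\cong}\bar M^0\to 0$, and its single Gorenstein injective term $\bar M^0\cong\bar M$ exhibits $\bar M$ as Gorenstein injective.

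The step I expect to carry the real weight is the translation of ``$x$ is a maximal combined $M$-- and $R$--sequence'' into the depth--zero statement, i.e. the input from \cite[1.7]{Shb}; once this is in hand, the equalities of 3.3 do the rest mechanically. A subtlety worth isolating is that a priori only one of $\bar M$, $\bar R$ is flagged as having depth zero, but this causes no trouble: since 3.3 equates all four invariants at $\bar{\g m}$, the depths of $\bar M$ and $\bar R$ already agree, so vanishing of either one pins $\Gid_{\bar R}\bar M$ to zero.
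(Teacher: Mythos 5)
Your argument is correct and is essentially the paper's own: the paper simply invokes Theorem 3.6 together with \cite[1.7]{Shb}, and your prime-avoidance/depth-zero computation (fed into the fourfold equality of 3.3 at the maximal ideal to force $\Gid_{\bar R}\bar M=0$) is exactly the content that citation is meant to supply. The one caveat is your dismissal of the non-local case: a sequence maximal with respect to being simultaneously an $M$-sequence and an $R$-sequence is perfectly meaningful when $R$ is not local (and Theorem 3.6 does not assume locality), and it is precisely \cite[1.7]{Shb} that covers that generality by showing every prime of $\Supp_{R/xR}(M/xM)$ is minimal in that support, so that $C(M/xM)$ collapses to its degree-zero term; your local argument is a correct special case of this.
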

\textbf{Remark}. In the rest of the paper we will use the notion of
a maximal Cohen--Macaulay module. Let $R$ be a local ring with $dim
R=d$. A Cohen--Macaulay $R$--module $M$ is said to be maximal
Cohen--Macaulay if $dim_R M=d$. Note that if $M$ is a such module,
then $\hit_M {\g p}=\hit_R {\g p}$ for all $\g p\in\Supp_R M$.
\begin{Thm}
Let $(R,\g m)$ be a local ring of dimension $d$ which admits a
dualizing complex and let $M$ be a maximal Cohen--Macaulay
$R$--module with $\Gid_R M<\infty$. Then $M$ is G--Gorenstein and
$R$ is Cohen--Macaulay. In particular, $\lh_{\g m}^d (M)$, the
$d$-th local cohomology module of $M$ with respect to $\g m$, is
Gorenstein injective.
\end{Thm}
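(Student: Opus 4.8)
The plan is to verify the numerical criterion of Theorem 3.3 for every $\g p\in\Supp_R M$, together with a separate argument that $R$ is Cohen--Macaulay. First I would localize. Since $M$ is (maximal) Cohen--Macaulay, \cite[2.4]{Shb} gives $\depth_{R_\g p}M_\g p=\hit_M\g p$, and the Remark preceding the statement gives $\hit_M\g p=\hit_R\g p=\dim R_\g p$ for all $\g p\in\Supp_R M$; hence $\depth_{R_\g p}M_\g p=\dim R_\g p$, so each $M_\g p$ is itself a maximal Cohen--Macaulay $R_\g p$--module. Moreover $\Gid_R M<\infty$ localizes by \cite[5.5]{Ch-F-H}, so $\Gid_{R_\g p}M_\g p<\infty$, and the Bass formula \cite[6.3]{Ch-F-H} yields $\Gid_{R_\g p}M_\g p=\depth R_\g p$. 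At this point the quantities in Theorem 3.3(ii) read $\depth_{R_\g p}M_\g p=\hit_M\g p=\hit_R\g p=\dim R_\g p$ and $\Gid_{R_\g p}M_\g p=\depth R_\g p$, so the whole criterion -- and with it the G--Gorenstein property -- will follow the moment we know $\dim R_\g p=\depth R_\g p$, i.e. that $R$ is Cohen--Macaulay.

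The heart of the proof is therefore to show that $R$ is Cohen--Macaulay, and since a local ring is Cohen--Macaulay as soon as $\depth R=\dim R$ it suffices to treat $\g p=\g m$. By the Bass formula $\Gid_R M=\depth R$, while maximal Cohen--Macaulayness gives $\dim_R M=d$; thus I only need the inequality $\dim_R M\le\Gid_R M$. To obtain it I would first record that every Gorenstein injective $R$--module $G$ is acyclic for higher local cohomology, that is $\lh_{\g m}^i(G)=0$ for all $i>0$. This comes from dimension shifting along a complete injective resolution $\cdots\to E^{-1}\to E^0\to E^1\to\cdots$ of $G$: writing $Z^j$ for its syzygies (each again Gorenstein injective) and using that injectives have vanishing higher local cohomology, the sequences $0\to Z^j\to E^j\to Z^{j+1}\to 0$ give $\lh_{\g m}^i(Z^{j+1})\cong\lh_{\g m}^{i+1}(Z^j)$ for $i\ge 1$; iterating to the left pushes the degree above $\dim R$, where local cohomology vanishes. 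Granting this, a Gorenstein injective resolution $0\to M\to G^0\to\cdots\to G^n\to 0$ of length $n=\Gid_R M$ computes $\lh_{\g m}^\bullet(M)$ as the cohomology of $\Gamma_{\g m}(G^\bullet)$, which is zero in degrees $>n$; hence $\lh_{\g m}^i(M)=0$ for $i>\Gid_R M$ and so $\dim_R M\le\Gid_R M$. Combining, $d=\dim_R M\le\Gid_R M=\depth R\le\dim R=d$, forcing $\depth R=\dim R$ and proving $R$ Cohen--Macaulay.

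With $R$ Cohen--Macaulay, $R_\g p$ is Cohen--Macaulay for every $\g p$, so $\hit_R\g p=\depth R_\g p$ and the chain above closes up to $\depth_{R_\g p}M_\g p=\hit_M\g p=\Gid_{R_\g p}M_\g p=\depth R_\g p$ for all $\g p\in\Supp_R M$; Theorem 3.3 then gives that $M$ is G--Gorenstein. For the final assertion, note that $\Supp_R M$ has dimension $d$, so the $M$--height filtration stops and $C(M)$ is the Gorenstein injective resolution $0\to M\to M^0\to\cdots\to M^d\to 0$; in particular its top term $M^d$ is Gorenstein injective. Exactly as in the proof of Theorem 3.3 -- via \cite[3.5]{Sha}, \cite[6.1.4]{B-Sh} and the main theorem of \cite{Shc} applied at $\g p=\g m$ with $t=\hit_M\g m=d$ -- one identifies $M^d\cong\lh_{\g m}^d(M)$, whence $\lh_{\g m}^d(M)$ is Gorenstein injective. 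The main obstacle is the Cohen--Macaulay step, concretely the inequality $\dim_R M\le\Gid_R M$; everything else is bookkeeping built on Theorem 3.3 and the cited properties of Gorenstein injective dimension.
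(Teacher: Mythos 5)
Your proof is correct, but it is organized quite differently from the paper's. The paper does not route the main implication through Theorem 3.3 at all: it reruns the Cousin--complex induction directly, showing each $(\coker d^{n-2})_{\g p}$ is Gorenstein injective over $R_{\g p}$ by combining $\Gid_{R_{\g p}}M_{\g p}=\depth R_{\g p}\le \dim R_{\g p}=\hit_M{\g p}=n$ (only this \emph{inequality} is needed) with the co-syzygy criterion \cite[3.3]{Ch-F-H} and \cite[6.9]{Ch-F-H}; G--Gorensteinness is thus obtained without knowing that $R$ is Cohen--Macaulay, and the Cohen--Macaulayness of $R$ is then read off afterwards from $\depth R=\Gid_R M=\dim_R M=\dim R$. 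You invert this order: you reduce to the numerical criterion of Theorem 3.3(ii), observe that it collapses to $\depth R_{\g p}=\dim R_{\g p}$, and therefore must prove $R$ Cohen--Macaulay first. Your route to that --- Gorenstein injective modules are $\Gamma_{\g m}$-acyclic by dimension shifting along a complete injective resolution together with Grothendieck vanishing above $\dim R$, hence $\lh_{\g m}^i(M)=0$ for $i>\Gid_R M$, hence $\dim_R M\le \Gid_R M=\depth R$ by Grothendieck non-vanishing \cite[6.1.4]{B-Sh} --- is sound; the acyclicity statement is essentially \cite[2.6]{S}, which the paper itself invokes later in 4.10, and the parenthetical claim that the syzygies are again Gorenstein injective is not even needed for the shift. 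What the paper's order buys is economy (no ingredient beyond what already appears in the proof of 3.3); what yours buys is a clean reduction to the already-proved characterization 3.3 plus one self-contained local-cohomology lemma, and it isolates exactly where the Cohen--Macaulayness of $R$ comes from. The two treatments of the final assertion, identifying $M^d\cong \lh_{\g m}^d(M)$ via the main theorem of \cite{Shc}, coincide.
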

\begin{proof}

Write the Cousin complex $C(M)$ as  \vspace{0.3cm}
\begin{center}
$0\to M\xrightarrow{d^{-1}}\ M^0\xrightarrow{d^0}\ M^1\to\cdots\to
M^n\xrightarrow{d^n}\ M^{n+1}\cdots$
\end{center} \vspace{0.3cm}
and note that, by \cite[2.4]{Shb}, it is exact. Next we use
induction on $n$ to show that $(\coker d^{n-2})_\g p$ is Gorenstein
injective as an $R_\g p$--module for all $\g p\in\Supp_R M$ with
$\hit_M \g p=n$. The case where $n=0$ follows immediately from
\cite[5.5]{Ch-F-H}, \cite[6.3]{Ch-F-H} and the above remark. Now,
let $n>0$ and suppose that the result has been proved for smaller
values of $n$. Let $\g p\in\Supp_R M$ with $\hit_M \g p=n$. Pass to
localization and consider the exact sequence \vspace{0.2cm}
\begin{center}
$0\to M_\g p\to (M^0)_\g p\to (M^1)_\g p\to \cdots \to
(M^{n-1})_\g p\to (\coker d^{n-2})_\g p\to 0$.
\end{center}
Since, in view of \cite[5.5]{Ch-F-H}, \cite[6.3]{Ch-F-H} and the
above remark, we have \vspace{0.2cm}
\begin{center}
$\Gid_{R_\g p} M_\g p=\depth {R_\g p}\leq dim {R_\g p}=\hit \g
p=n$,
\end{center} \vspace{0.2cm}
one can use the above exact sequence in conjunction with the
inductive hypothesis and \cite[3.3]{Ch-F-H} to see that $(\coker
d^{n-2})_\g p$ is a Gorenstein injective $R_\g p$--module. This
completes the inductive step. It now follows from 3.2 and
\cite[6.9]{Ch-F-H} that $M^n=\bigoplus_{\hit_M \g p=n} (\coker
d^{n-2})_\g p$ is a Gorenstein injective $R$--module for all
$n\geq 0$; and hence $C(M)$ is a Gorenstein injective resolution.
Therefore $M$ is G--Gorenstein. Then, by 3.4, $\depth_R M= dim_R
M=\Gid_R M= \depth R$. Thus, since $M$ is maximal
Cohen--Macaulay, $R$ is Cohen--Macaulay. The last assertion
follows immediately from the first part and the main theorem of
\cite{Shc}.
\end{proof}
In the following proposition, we characterize finitely generated
Gorenstein projective modules in terms of G--Gorenstein modules,
over Gorenstein local rings.
\begin{Prop}
Let $R$ be a Gorenstein local ring and let $M$ be a non-zero
finitely generated $R$--module. Then the following conditions are
equivalent.
\begin{itemize}
\item[(i)] $M$ is G--Gorenstein.
\item[(ii)] $M$ is Gorenstein projective.
\end{itemize}
\end{Prop}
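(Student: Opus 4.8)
The plan is to read off both implications from the depth/height characterization in Theorem~3.3, from Theorem~3.8, and from the standard homological machinery of Gorenstein dimensions over a Gorenstein local ring. Throughout write $d=\dim R$ and recall that a Gorenstein local ring admits a dualizing complex, is Cohen--Macaulay with $\depth R=d$, and has the property that every finitely generated module has finite Gorenstein projective and finite Gorenstein injective dimension. I will also use that for a finitely generated module the condition ``Gorenstein projective'' coincides with $\Gpd_R M=0$, together with the Auslander--Bridger formula $\Gpd_R M=\depth R-\depth_R M$, which is available here because $\Gpd_R M<\infty$.

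For (i)$\Rightarrow$(ii) I would argue as follows. If $M$ is G--Gorenstein then, since $R$ is Gorenstein and hence admits a dualizing complex, Theorem~3.3 applies and gives $\depth_{R_{\g p}} M_{\g p}=\hit_M\g p=\Gid_{R_{\g p}} M_{\g p}=\depth R_{\g p}$ for every $\g p\in\Supp_R M$. Taking $\g p=\g m$ yields $\depth_R M=\depth R=d$, and since $\depth_R M\le\dim_R M\le d$ this forces $\dim_R M=d$, i.e.\ $M$ is maximal Cohen--Macaulay. The Auslander--Bridger formula then gives $\Gpd_R M=d-d=0$, so $M$ is Gorenstein projective.

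For (ii)$\Rightarrow$(i) I would run the same equalities in reverse. If $M$ is Gorenstein projective then $\Gpd_R M=0$, so the Auslander--Bridger formula gives $\depth_R M=\depth R=d=\dim R$; hence $M$ is maximal Cohen--Macaulay. Because $R$ is Gorenstein we also have $\Gid_R M<\infty$, so $M$ satisfies exactly the hypotheses of Theorem~3.8, which therefore applies and shows that $M$ is G--Gorenstein.

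The hard part is not either implication in isolation but correctly importing the external input: that for finitely generated modules ``Gorenstein projective'' is the same as $\Gpd_R M=0$ (equivalently $\mathrm{G}$--dimension zero), that the Auslander--Bridger depth formula holds whenever $\Gpd_R M<\infty$, and that $\Gid_R M<\infty$ for every module over a Gorenstein ring. Once these are cited, the only point needing care is the identification $\hit_M\g m=\depth_R M$ at the closed point, which is already built into the equality chain of Theorem~3.3, and the observation that ``maximal Cohen--Macaulay with finite Gorenstein injective dimension'' is precisely the hypothesis of Theorem~3.8.
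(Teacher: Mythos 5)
Your proof is correct and follows essentially the same route as the paper: both implications pivot on identifying each condition with ``$M$ is maximal Cohen--Macaulay,'' using Theorem~3.3 for (i)$\Rightarrow$(ii) and Theorem~3.8 (plus finiteness of $\Gid_R M$ over a Gorenstein ring) for (ii)$\Rightarrow$(i). The only difference is cosmetic: where you invoke the Auslander--Bridger formula to translate between ``Gorenstein projective'' and ``maximal Cohen--Macaulay,'' the paper simply cites the corresponding equivalence from Enochs--Jenda \cite[11.5.4]{E-Jb}.
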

\begin{proof}
(i) $\Rightarrow$ (ii). According to 3.4, $M$ is maximal
Cohen--Macaulay, and so is Gorenstein projective by
\cite[11.5.4]{E-Jb}. (ii) $\Rightarrow$ (i) is a consequence of
\cite[11.5.4]{E-Jb} and 3.8.
\end{proof}
\begin{Lem}
Let $R$ be a Cohen--Macaulay local ring which admits a dualizing
complex. Suppose that every maximal Cohen--Macaulay module is of
finite injective dimension. Then $R$ is regular.
\end{Lem}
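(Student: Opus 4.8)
The plan is to first reduce to the Gorenstein case and then exploit a sufficiently high syzygy of the residue field. Since $R$ is Cohen--Macaulay, $R$ is itself a maximal Cohen--Macaulay $R$--module; hence, by hypothesis, $\id_R R<\infty$, and so $R$ is Gorenstein. It remains to upgrade ``Gorenstein'' to ``regular'', and for this I would show that the residue field $R/\g m$ has finite projective dimension.

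Write $d=dim R$ and let $\Omega^i$ denote the $i$--th syzygy of $R/\g m$ in a minimal free resolution, so that $L:=\Omega^d$ is the candidate module. A standard depth count along the defining short exact sequences $0\to\Omega^{i+1}\to F_i\to\Omega^i\to 0$ (with $F_i$ free) gives $\depth_R\Omega^i\geq\min\{d,i\}$, so $\depth_R L\geq d$; since $\depth_R L\leq dim R=d$, the module $L$ is maximal Cohen--Macaulay. (If $L=0$, then $R/\g m$ already has finite projective dimension and $R$ is regular, so I may assume $L\neq 0$.) By hypothesis $\id_R L<\infty$.

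The key step is to pass from finite injective dimension to finite projective dimension. Here I would use that over the Gorenstein local ring $R$ the canonical module is $\omega_R\cong R$, and that for a maximal Cohen--Macaulay module the dual $L^{\dagger}=\Hom_R(L,R)$ is again maximal Cohen--Macaulay with $L^{\dagger\dagger}\cong L$. Since $\id_R L<\infty$, this duality yields $\pd_R L^{\dagger}<\infty$; as $L^{\dagger}$ is maximal Cohen--Macaulay, the Auslander--Buchsbaum formula gives $\pd_R L^{\dagger}=\depth R-\depth_R L^{\dagger}=0$, so $L^{\dagger}$ is free, whence $L\cong L^{\dagger\dagger}$ is free. (Equivalently, one may invoke the well--known fact that over a Gorenstein local ring a finitely generated module has finite injective dimension if and only if it has finite projective dimension.)

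Finally, since $L$ is the $d$--th syzygy of $R/\g m$ and is free, $\pd_R(R/\g m)\leq d<\infty$, and therefore $R$ is regular. The main obstacle is the middle step: justifying that finite injective dimension forces finite projective dimension over a Gorenstein ring. I expect the cleanest route is the duality argument above, using that $R$ is its own dualizing module; citing the projective/injective dimension equivalence over Gorenstein rings (available in the Enochs--Jenda framework already in use) would close the gap just as well.
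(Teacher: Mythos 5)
Your proof is correct, but it takes a genuinely different route from the paper's. The paper invokes the Auslander--Buchweitz theory of maximal Cohen--Macaulay approximations: there is an exact sequence $0\to X\to M\to k\to 0$ with $M$ maximal Cohen--Macaulay and $\id_R X<\infty$, so the hypothesis applied to this single module $M$ gives $\id_R k<\infty$ and hence regularity at once; this is why the dualizing complex appears in the hypothesis, since the approximation requires a canonical module. You instead apply the hypothesis twice: first to $R$ itself to get $\id_R R<\infty$, i.e.\ $R$ Gorenstein, and then to the $d$--th syzygy $L=\Omega^d(k)$, which your depth count correctly shows is maximal Cohen--Macaulay (or zero). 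Your argument then needs only the standard equivalence ``finite injective dimension $\Leftrightarrow$ finite projective dimension for finitely generated modules over a Gorenstein local ring'' together with Auslander--Buchsbaum, and notably it does not use the dualizing complex hypothesis at all, so it proves a slightly more general statement by more elementary means (syzygies rather than approximations). One caveat: the intermediate step ``since $\id_R L<\infty$, the duality $L\mapsto\Hom_R(L,R)$ yields $\pd_R L^{\dagger}<\infty$'' is not justified as written --- it is essentially a restatement of the fact you are trying to prove --- but your parenthetical fallback, the projective/injective dimension equivalence over Gorenstein rings (provable, e.g., by noting that each indecomposable injective $E(R/\g p)$ has finite flat dimension over a Gorenstein ring, or by Foxby duality with $\omega\cong R$), is standard and closes the gap, after which $\pd_R L=\depth R-\depth_R L=0$ makes $L$ free and $\pd_R k\leq d$ finishes the proof.
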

\begin{proof}
Let $k$ be the residue field of $R$. Since $k$ is finitely
generated, by \cite{A-B}, there exists an exact sequence (which
is called a Cohen--Macaulay approximation) \vspace{0.1cm}
\begin{center}
$0\to X\to M\to k\to 0$,
\end{center} \vspace{0.2cm}
where M is a maximal Cohen--Macaulay $R$--module and X is an
$R$--module of finite injective dimension. It therefore follows from
the hypothesis that $\id_R k<\infty$. Hence, by \cite[3.1.26]{B-H},
$R$ is regular.
\end{proof}
\textbf{Remark}. Let $R$ be a non-regular Cohen--Macaulay local ring
which admits a dualizing complex. Then, by 3.10, there exists at
least one maximal Cohen--Macaulay module of infinite injective
dimension.
\begin{Thm}
Let $R$ be a non-regular Gorenstein local ring. Then the class of
G--Gorenstein modules strictly contains the class of Gorenstein
modules.
\end{Thm}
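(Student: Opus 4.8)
The plan is to produce a single module that is G--Gorenstein but fails to be Gorenstein, drawing on the existence result contained in the remark following 3.10. First I would record the easy inclusion: by the observation immediately following Definition 3.1, any Gorenstein module is G--Gorenstein, so the class of Gorenstein modules is contained in the class of G--Gorenstein modules, and it remains only to exhibit a G--Gorenstein module that is not Gorenstein.

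Next I would note that a non-regular Gorenstein local ring $R$ is in particular Cohen--Macaulay and admits a dualizing complex. Hence the remark following 3.10 applies and supplies a maximal Cohen--Macaulay $R$--module $M$ with $\id_R M=\infty$. This $M$ is the intended example, and there are exactly two things to verify: that it is G--Gorenstein, and that it is not Gorenstein. For the first, I would argue as in the proof of 3.9: since $R$ is Gorenstein and $M$ is maximal Cohen--Macaulay, $M$ is Gorenstein projective by \cite[11.5.4]{E-Jb}, and therefore G--Gorenstein by 3.9. (Equivalently, $\Gid_R M<\infty$ automatically over a Gorenstein ring, so 3.8 applies directly and gives the same conclusion.)

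For the second point, the key observation is that a Gorenstein module has its Cousin complex as a genuine injective resolution. Since for a finitely generated module over a local ring of dimension $d$ the $M$--height filtration satisfies $\hit_M\g p\leq \dim_R M\leq d$, the complex $C(M)$ vanishes in degrees above $d$ and is thus finite; consequently every Gorenstein module has finite injective dimension. As $\id_R M=\infty$, the module $M$ cannot be Gorenstein. Combining the two verifications, $M$ is G--Gorenstein but not Gorenstein, so the inclusion of the first paragraph is strict, which is the assertion.

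The main obstacle is conceptual rather than computational: one must locate a module that is simultaneously Gorenstein projective (hence maximal Cohen--Macaulay, which forces it to be G--Gorenstein) and of infinite injective dimension (which bars it from being Gorenstein). The remark after 3.10 furnishes precisely such a module, and this is exactly where non-regularity of $R$ is used. The underlying contrast is that a Gorenstein injective resolution may be infinite and assembled from non-injective Gorenstein injectives, whereas a Gorenstein module demands an honest, and necessarily finite, injective resolution.
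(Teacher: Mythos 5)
Your proposal is correct and follows essentially the same route as the paper: both invoke the remark after 3.10 to produce a maximal Cohen--Macaulay module of infinite injective dimension, then conclude it is G--Gorenstein via 3.8 (or equivalently 3.9) and not Gorenstein because Gorenstein modules have finite injective dimension. The only difference is that you spell out the latter point, which the paper leaves implicit.
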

\begin{proof}
It follows from the hypothesis in conjunction with the above remark
that there exists a maximal Cohen--Macaulay module $M$ of infinite
injective dimension. Now, $M$ is not a Gorenstein module, while, by
3.8 and \cite[10.1.13]{E-Jb}, it is a G--Gorenstein module.
\end{proof}
\begin{Prop}
Let $(R,\g m,k)$ be a local ring and let $M$ be a G--Gorenstein
$R$--module of Krull dimension $d$. If $\fd_R(\lh_{\g m}^d
(M))<\infty$, then $R$ and $M$ are Gorenstrin.
\end{Prop}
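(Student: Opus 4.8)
The plan is to reduce to the complete case and then translate the flat--dimension hypothesis through Matlis and local duality. First I would pass to the completion $\hat R$: the module $\hat M$ is again G--Gorenstein of dimension $d$, one has $\lh_{\g m}^d(M)=\lh_{\hat{\g m}}^d(\hat M)$ as $\hat R$--modules, and both the finiteness of flat dimension and the Gorenstein property of $R$ and of $M$ are insensitive to completion; so I may assume $R$ is complete and hence admits a dualizing complex. Applying 3.3 with $\g p=\g m$ then gives that $M$ is Cohen--Macaulay with $\depth_R M=\dim_R M=d=\Gid_R M=\depth R$, and, exactly as in the proof of 3.8 together with the main theorem of \cite{Shc}, the top term of $C(M)$ is $M^d\cong\lh_{\g m}^d(M)$ and is Gorenstein injective.

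Next I would exploit that $N:=\lh_{\g m}^d(M)$ is Artinian of finite flat dimension. Matlis duality turns a finite flat resolution of $N$ into a finite injective resolution of its dual $N^{\vee}=\Hom_R(N,E)$ (the dual of a flat module is injective), so $N^{\vee}$ is a non--zero finitely generated $R$--module with $\id_R N^{\vee}<\infty$. By Bass's theorem \cite{B-H}, the existence of such a module forces $R$ to be Cohen--Macaulay; combined with $\depth R=d$ this gives $\dim R=d$, so $M$ is maximal Cohen--Macaulay and $R$ has a canonical module $\omega$. Local duality now identifies $N^{\vee}\cong\Hom_R(M,\omega)$, which is therefore a maximal Cohen--Macaulay module of finite injective dimension. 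By the structure theory of Gorenstein modules over a Cohen--Macaulay local ring \cite{Shb}, such a module is a finite direct sum of copies of $\omega$; dualizing back with $\Hom_R(-,\omega)$ yields $M\cong R^{n}$ for some $n\geq1$.

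Finally, $R^{n}$ being G--Gorenstein forces $R$ itself to be G--Gorenstein, since $C(R^{n})=C(R)^{n}$ is a Gorenstein injective resolution if and only if each summand $C(R)$ is one; whence, by 3.3, $\Gid_R R<\infty$, and a local ring of finite Gorenstein injective dimension over itself is Gorenstein \cite{Ch-F-H}. Then $M\cong R^{n}$ has finite injective dimension and, being Cohen--Macaulay, is a Gorenstein module, which completes the proof. The main obstacle is the middle paragraph: correctly converting the statement that $\lh_{\g m}^d(M)$ has finite flat dimension into information about $M$ itself, by chaining Matlis duality (finite flat dimension versus finite injective dimension of the Artinian dual), the Bass theorem for Cohen--Macaulayness, local duality to recognize $N^{\vee}$ as the canonical dual of $M$, and the classification of maximal Cohen--Macaulay modules of finite injective dimension. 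The completion reduction must also be verified to preserve the G--Gorenstein hypothesis. Once $M$ is shown to be free the two conclusions follow quickly, the delicate point being that a free module is G--Gorenstein exactly when $R$ is Gorenstein, which is precisely what excludes the non--Gorenstein Cohen--Macaulay behaviour exhibited in 3.11.
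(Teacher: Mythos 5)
Your reduction to the complete case is the weak point, and it is a genuine gap. The statement ``$\hat M$ is again G--Gorenstein over $\hat R$'' is asserted, not proved, and nothing in the paper (or in the literature you implicitly lean on) delivers it in the generality needed here: Proposition 3.12 assumes only that $R$ is local, with no dualizing complex, so the characterization 3.3 is unavailable over $R$ itself, and all of the paper's transfer results for G--Gorensteinness (3.4, 3.6) are proved only under a dualizing complex hypothesis. Concretely, two things go wrong. First, the Cousin complex does not base change well along $R\to\hat R$: tensoring $C_R(M)$ with $\hat R$ yields the Cousin complex of $\hat M$ with respect to the filtration of $\Spec(\hat R)$ induced by $\mathcal{H}(M)$, which coincides with the $\hat M$--height filtration only when the formal fibres cooperate; so $C_R(M)\otimes_R\hat R$ need not be $C_{\hat R}(\hat M)$. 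Second, even granting exactness, one would need the terms of $C_{\hat R}(\hat M)$ to be Gorenstein injective over $\hat R$, i.e.\ that Gorenstein injectivity (or finiteness of $\Gid$) ascends along the flat map $R\to\hat R$; this is not known without a dualizing complex and is precisely the kind of delicate behaviour (akin to localization of Gorenstein injectives) that forces the dualizing-complex hypotheses throughout Section 3. Since 3.3 is the first thing you invoke after completing, the entire argument hangs on this unproved transfer. (There is also a small, repairable slip at the end: your duality argument produces $\hat M\cong\hat R^{\,n}$, and you must descend this to $M\cong R^n$, or at least descend the Gorenstein property of $\hat R$ to $R$, before concluding.)

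By contrast, the paper's proof never leaves $R$, because everything it needs is available from the definition alone: $C(M)$ is a Gorenstein injective resolution, and by the main theorem of \cite{Shc} its top term is $M^d\cong\lh_{\g m}^d(M)$, so $N:=\lh_{\g m}^d(M)$ is a nonzero, $\g m$--torsion (hence $\Hom_R(k,N)\neq 0$), Gorenstein injective module which by hypothesis has finite flat dimension. Holm's theorem \cite[3.3]{Ha} then gives at once that $R$ is Gorenstein; consequently $\id_R N<\infty$ by \cite[9.1.10]{E-Jb}, so $N$ is injective by \cite[10.1.2]{E-Jb}, and $M$ is Gorenstein by \cite[3.11]{Shb}. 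Note also that your route, even where it works, invokes much heavier machinery (Matlis duality over the completion, the Bass conjecture via the intersection theorem, local duality, and Sharp's classification of maximal Cohen--Macaulay modules of finite injective dimension), all of which the paper's two--line use of \cite[3.3]{Ha} replaces. To salvage your approach you would have to either prove the ascent of G--Gorensteinness to $\hat R$ or restructure the argument so that, as in the paper, the flat-dimension hypothesis is exploited directly over $R$.
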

\begin{proof}
Since $\lh_{\g m}^d (M)$ is $\g m$--torsion, one can see that
$\Hom_R(k,\lh_{\g m}^d(M))\neq0$. On the other hand, $\lh_{\g
m}^d(M)$ is Gorenstein injective by the main theorem of \cite{Shc}.
Therefore, in view of the hypothesis and \cite[3.3]{Ha}, $R$ is
Gorenstein. Then $\lh_{\g m}^d(M)$ has finite injective dimension by
\cite[9.1.10]{E-Jb}; and so, is injective by \cite[10.1.2]{E-Jb}.
Therefore, by \cite[3.11]{Shb}, $M$ is Gorenstein.
\end{proof}
\begin{Def}
Let $R$ be a Cohen--Macaulay local ring of Krull dimension $d$
which admits a dualizing complex and let $\omega$ be the
dualizing module of $R$. Following \cite{E-J-X}, let
$\mathcal{I}_0 (R)$ be the class of $R$--modules $N$ which
satisfies the following conditions.
\begin{itemize}
\item[(i)] $\Ext_R^i(\omega, N)=0$ , for all $i>0$.
\item[(ii)] $\Tor^R_i(\omega, \Hom_R(\omega,N))=0$, for all $i>0$.
\item[(iii)] The natural map $\omega\otimes_R\Hom_R(\omega, N)\to N$
is an isomorphism.
\end{itemize}
This class of $R$--modules is called the Bass class.
\par \vspace{0.5cm}
In the rest of this section, we assume that $(R,\g m)$ is a
Cohen--Macaulay local ring of Krull dimension $d$ which admits a
dualizing complex.
\end{Def}
\begin{Thm}
Let $M$ be a maximal Cohen--Macaulay $R$--module. Suppose that
$x=(x_1,\ldots,x_n)$ is an $R$--sequence, then the following
conditions are equivalent.
\begin{itemize}
\item[(i)] $\Gid_R M<\infty$.
\item[(ii)] $\Gid_{R/xR} (M/xM)<\infty$.
\end{itemize}
\end{Thm}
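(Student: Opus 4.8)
The plan is to reduce the statement to the case $n=1$ by induction on $n$, and then to run, in both directions, the change-of-rings machinery already exploited in the proof of 3.6. First I would record that, since $M$ is maximal Cohen--Macaulay over the Cohen--Macaulay ring $R$, the $R$--sequence $x$ is automatically an $M$--sequence. Consequently $\bar M=M/x_1M$ is a maximal Cohen--Macaulay module over $\bar R=R/x_1R$, while $\bar R$ is again a Cohen--Macaulay local ring of dimension $d-1$ admitting a dualizing complex. Thus every standing hypothesis is inherited after factoring out a single regular element, and it suffices to settle the case $n=1$: applying that case successively to $x_1,\ldots,x_n$ over the rings $R,\ R/x_1R,\ \ldots$ (at each stage to the image of the next element acting on the current quotient) assembles the general equivalence.

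For $n=1$ I would work throughout with the short exact sequence
\begin{center}
$0\to M\xrightarrow{x_1} M\to M/x_1M\to 0$.
\end{center}
For (i)$\Rightarrow$(ii) the argument is exactly the one embedded in the proof of 3.6: finiteness of $\Gid_R M$ forces $\Gid_R(M/x_1M)<\infty$ by \cite[2.25]{Hb}, and then \cite[11.69]{R} together with \cite[2.8]{Ch-H} descends this to $\Gid_{\bar R}(M/x_1M)<\infty$. For (ii)$\Rightarrow$(i) I would traverse the same two transfer results in the opposite order: starting from $\Gid_{\bar R}(M/x_1M)<\infty$, the comparison of \cite[2.8]{Ch-H} and \cite[11.69]{R} yields $\Gid_R(M/x_1M)<\infty$, and then the converse implication in \cite[2.25]{Hb}, read off the displayed sequence, returns $\Gid_R M<\infty$.

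The main obstacle is precisely that 3.6 invoked these results only in one direction, whereas here I need them as genuine equivalences. So the crux is to confirm that \cite[2.25]{Hb} is biconditional for the element $x_1$ (regular on both $R$ and $M$) — that is, that finiteness of $\Gid_R(M/x_1M)$ pulls back to finiteness of $\Gid_R M$ — and that the base-change comparison of \cite[2.8]{Ch-H} and \cite[11.69]{R} transfers finite Gorenstein injective dimension \emph{up} from $\bar R$ to $R$ as well as down. Once both directions of these two results are in hand, the inductive assembly is routine, and the maximal Cohen--Macaulay hypothesis enters only to guarantee that $x$ is an $M$--sequence, so that each quotient retains the maximal Cohen--Macaulay property and the displayed short exact sequence is available at every step.
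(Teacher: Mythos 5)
Your reduction to the case $n=1$ and your treatment of (i)$\Rightarrow$(ii) coincide with the paper's. For (ii)$\Rightarrow$(i), however, you have correctly identified the crux --- that \cite[2.25]{Hb} and the base-change comparison would have to be biconditional --- and then simply assumed it, and the assumption fails exactly where you need it most. The result \cite[2.25]{Hb} is a two-out-of-three statement: in a short exact sequence, if two of the three modules have finite Gorenstein injective dimension, then so does the third. Applied to $0\to M\xrightarrow{x_1} M\to M/x_1M\to 0$, the two terms whose finiteness is in question are both equal to $M$ itself, so knowing only that $\Gid_R(M/x_1M)<\infty$ yields nothing; there is no usable ``converse implication'' to read off this sequence. (The transfer between $R$ and $\bar R=R/x_1R$ for the $\bar R$--module $M/x_1M$ is comparatively harmless; the genuine obstruction is passing from $M/x_1M$ back to $M$ over $R$.)

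The paper's proof of (ii)$\Rightarrow$(i) rests on a different idea that your proposal lacks: finiteness of $\Gid_R M$ is characterized by membership of $M$ in the Bass class $\mathcal{J}_0(R)$ with respect to the dualizing module $\omega$ (via \cite[10.4.23]{E-Jb}), and the three defining conditions are verified for $M$ by lifting them from $\bar M\in\mathcal{J}_0(\bar R)$. Concretely, Rees's change-of-rings isomorphisms \cite[p.140, lemma 2]{M} together with \cite[3.3.3]{B-H} translate the vanishing of $\Ext_{\bar R}^i(\bar\omega,\bar M)$ and $\Tor^{\bar R}_i(\bar\omega,\Hom_{\bar R}(\bar\omega,\bar M))$ into statements about $\Ext_R^i(\omega,M)$ and $\Tor^R_i(\omega,\Hom_R(\omega,M))$ modulo $x_1$, after which Nakayama's lemma (using that $M$ is finitely generated) forces these to vanish for $i>0$; the natural map $\omega\otimes_R\Hom_R(\omega,M)\to M$ is then shown to be an isomorphism by reducing modulo $x_1$ and invoking \cite[3.3.2]{B-H}. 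Without this Bass-class argument, or some substitute for it, your implication (ii)$\Rightarrow$(i) does not go through, so the proposal has a genuine gap rather than merely an unchecked reference.
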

\begin{proof}
(i)$\Rightarrow$(ii). This follows from \cite[2.25]{Hb},
\cite[11.69]{R} and \cite[2.8]{Ch-H}. (ii) $\Rightarrow$ (i). We
proceed by induction on $n$. Since the general case uses the same
argument as the case where $n=1$, we provide a proof for the case
$n=1$.
\par
To this end, set $\bar{M}={M/x_1 M}$ and $\bar{R}={R/x_1 R}$, and
let $\bar{\omega}=\omega/{x_1 \omega}$, where $\omega$ is the
dualizing module of $R$. In order to prove the assertion, it is
enough, by \cite[10.4.23]{E-Jb}, to show that $M\in \mathcal{J}_
0(R)$. Therefore we need only to check the above three requirements.
Since, by hypothesis, $\bar{M}\in \mathcal{J}_0 (\bar{R})$, we have
by \cite[p.140,lemma 2]{M}, $\Ext_R^i(\bar{\omega}, M)=0$, for all
$i\geq 2$. Now, one can use the exact sequence \vspace{0.2cm}
\begin{center}
$\cdots\to \Ext_R^i(\omega, M)\xrightarrow{x}\Ext_R^i(\omega, M)\to
\Ext_R^{i+1}(\bar{\omega}, M)\to \cdots$
\end{center} \vspace{0.2cm}
and Nakayama's lemma to see that $\Ext_R^i(\omega, M)=0$\, for all
$i>0$; hence the requirement (i) holds. To prove the requirement
(ii), we can use \cite[3.3.3]{B-H} and \cite[p.140,lemma 2]{M} to
see that \vspace{0.5cm}
\par
\hspace{1.5cm} $\Tor^{\bar{R}}_i(\bar{\omega},
\Hom_{\bar{R}}(\bar{\omega}, \bar{M}))
\cong\Tor^{\bar{R}}_i(\bar{\omega}, \Hom_R(\omega,
M)\otimes_R{\bar{R}})$
\par \vspace{0.2cm}
\hspace{5.35cm}  $\cong \Tor^R_i(\bar{\omega}, \Hom_R(\omega, M)$,
for all $i\geq0$. \vspace{0.3cm} \par Therefore,
$\Tor^R_i(\bar{\omega}, \Hom_R(\omega, M))=0$, for all $i>0$. Now,
using the same argument as above, we deduce $\Tor^R_i(\omega,
\Hom_R(\omega, M))=0$, for all $i>0$. It remains only the proof of
the requirement (iii). To this end, by hypothesis, we have
\par
\vspace{0.5cm} \hspace{3cm}$\bar{R}\otimes_R M\cong \bar{M}\cong
\bar{\omega}\otimes_{\bar{R}} \Hom_{\bar{R}}(\bar{\omega}, \bar{M})$
\vspace{0.2cm}
\par
\hspace{5.45cm}$\cong \bar{\omega}\otimes_{\bar{R}}(\bar{R}\otimes_R
\Hom_R(\omega, M))$ \vspace{0.2cm}
\par \hspace{5.3cm} $\cong \bar{\omega}\otimes_R \Hom_R(\omega, M)$
\vspace{0.2cm}
\par \hspace{5.43cm}
$\cong \bar{R} \otimes_R (\omega \otimes_R \Hom_R(\omega, M))$
\vspace{0.5cm}
\par
Hence, by \cite[3.3.2]{B-H}, $M\cong \omega \otimes_R\Hom_R(\omega,
M)$. It therefore follows that $M\in \mathcal{J}_0 (R)$.
\end{proof}
\begin{Thm}
Let  $M$ be a non-zero finitely generated $R$--module. Then the
following conditions are equivalent.
\begin{itemize}
\item[(i)] $M$ is G--Gorenstein.
\item[(ii)] $\depth_R M=dim_R M=\Gid_R M=\depth R$.
\item[(iii)] $M$ is a balanced big Cohen--Macaulay module with
$\Gid_R M<\infty$.
\item[(iv)] For any sequence $x=(x_1,\ldots,x_n)$ which is maximal
with respect to the property of being both an $M$--sequence and an
$R$--sequence, $M/xM$ is a Gorenstein injective $R/xR$--module.
\item[(v)] For some sequence $x=(x_1,\ldots,x_n)$ which is maximal
with respect to the property of being both an $M$--sequence and an
$R$--sequence, $M/xM$ is a Gorenstein injective $R/xR$--module.
\end{itemize}
\end{Thm}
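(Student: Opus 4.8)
The plan is to prove the five statements equivalent through the single cycle $(\mathrm{i})\Rightarrow(\mathrm{iv})\Rightarrow(\mathrm{v})\Rightarrow(\mathrm{ii})\Rightarrow(\mathrm{iii})\Rightarrow(\mathrm{i})$, using freely the standing assumption that $(R,\g m)$ is Cohen--Macaulay of dimension $d$ with a dualizing complex, so that $\depth R=\dim R=d$. The workhorse is the Gorenstein analogue of Bass's formula \cite[6.3]{Ch-F-H}: if $N\neq 0$ is finitely generated over a local ring $S$ (admitting a dualizing complex) with $\Gid_S N<\infty$, then $\Gid_S N=\depth S$. Note that every ring occurring below, in particular each $R/xR$ with $x$ an $R$-sequence, is again Cohen--Macaulay local with a dualizing complex, so this formula and Theorems 3.8 and 3.14 are available for it.

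The implications among $(\mathrm{i})$, $(\mathrm{ii})$, $(\mathrm{iii})$ and the step $(\mathrm{i})\Rightarrow(\mathrm{iv})$ are quick. For $(\mathrm{i})\Rightarrow(\mathrm{iv})$ I would simply invoke Corollary 3.7. For $(\mathrm{iv})\Rightarrow(\mathrm{v})$ it suffices to observe that a maximal joint $M$- and $R$-sequence exists: build it greedily, adjoining at each stage an element of $\g m$ that is a non-zerodivisor on both $R/xR$ and $M/xM$ (possible by prime avoidance until $\g m$ is associated to one of them), a process that terminates in at most $d$ steps. For $(\mathrm{ii})\Rightarrow(\mathrm{iii})$: condition $(\mathrm{ii})$ makes $M$ maximal Cohen--Macaulay with $\Gid_R M=d<\infty$, so every system of parameters of $R$ is an $M$-sequence; hence $M$ is big Cohen--Macaulay with respect to some s.o.p.\ and, being finitely generated, is balanced big Cohen--Macaulay by the remark after Definition 2.4. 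For $(\mathrm{iii})\Rightarrow(\mathrm{i})$: a finitely generated balanced big Cohen--Macaulay module satisfies $\depth_R M=\dim_R M=d=\dim R$, so it is maximal Cohen--Macaulay; together with $\Gid_R M<\infty$, Theorem 3.8 yields $(\mathrm{i})$. (One may also read $(\mathrm{i})\Leftrightarrow(\mathrm{ii})$ directly off Theorem 3.3 evaluated at $\g m$, since $\hit_M\g m=\dim_R M$ for the Cohen--Macaulay module $M$, together with Theorem 3.8.)

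The decisive step is $(\mathrm{v})\Rightarrow(\mathrm{ii})$. Set $\bar R=R/xR$ and $\bar M=M/xM$, where $x$ has length $n$. As $x$ is an $M$-sequence, $\bar M\neq 0$, and by hypothesis $\bar M$ is Gorenstein injective, so $\Gid_{\bar R}\bar M=0<\infty$; the Bass formula then forces $\depth\bar R=0$. On the other hand $x$ is an $R$-sequence in the Cohen--Macaulay ring $R$, whence $\bar R$ is Cohen--Macaulay with $\depth\bar R=\dim\bar R=d-n$. Comparing gives $n=d$, so $x$ is an $M$-sequence of length $d$ and $\depth_R M\ge d$, forcing $\depth_R M=\dim_R M=d=\dim R$; thus $M$ is maximal Cohen--Macaulay. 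Now Theorem 3.14, applied to this maximal Cohen--Macaulay $M$ and the $R$-sequence $x$, upgrades $\Gid_{\bar R}\bar M<\infty$ to $\Gid_R M<\infty$, and a last use of the Bass formula gives $\Gid_R M=\depth R=d$. Assembling these equalities yields exactly $(\mathrm{ii})$, closing the cycle.

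I expect the only real obstacle to be this last implication, and within it the one genuine idea: converting the homological input ``$\bar M$ is Gorenstein injective'', i.e.\ $\Gid_{\bar R}\bar M=0$, into the numerical statement $\depth\bar R=0$ via the Gorenstein Bass formula, which then pins down $n=d$ and hence the maximal Cohen--Macaulayness of $M$. Once that is in hand, everything reduces to routine bookkeeping with depth, dimension, and the already-established Theorems 3.8 and 3.14.
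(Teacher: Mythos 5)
Your proposal is correct and follows essentially the same route as the paper: both hinge on Corollary 3.7 for producing the Gorenstein injective quotient, Theorem 3.14 to lift finiteness of Gorenstein injective dimension from $R/xR$ back to $R$, and the Gorenstein Bass formula \cite[6.3]{Ch-F-H} to turn $\Gid_{R/xR}(M/xM)=0$ into $\depth R/xR=0$ and hence $n=d$ in the step (v)$\Rightarrow$(ii). The only differences are the (immaterial) arrangement of the implication cycle and your more explicit bookkeeping in (iv)$\Rightarrow$(v) and (v)$\Rightarrow$(ii), which the paper leaves terse.
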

\begin{proof}
(i) $\Leftrightarrow$ (ii). This follows from 3.4. (ii)
$\Rightarrow$ (iii). This is clear, since $M$ is a maximal
Cohen--Macaulay module. (iii) $\Rightarrow$ (iv). It follows by the
hypothesis that $M$ is maximal Cohen--Macaulay; and so $M$ is a
G--Gorenstein $R$--module by 3.8. Now the claim is immediate by 3.7.
Since (iv)$\Rightarrow$(v) is obvious, it remains to prove the
implication (v)$\Rightarrow$(ii). To this end, let
$x=(x_1,\ldots,x_n)$ be a sequence of elements of $R$ which
satisfies the hypothesis. Then, according to \cite[6.3]{Ch-F-H},
$\depth{R/xR}=\Gid_{R/xR} {M/xM}=0$. Therefore
\begin{center} $\depth_R M=dim_R M=\depth R=dim R=n$. \end{center}
Hence, $M$ is maximal Cohen--Macaulay; and so, by 3.14, it has
finite Gorenstein injective dimension. Therefore by
\cite[6.3]{Ch-F-H}, $\Gid_R M=\depth R$. This completes the proof.
\end{proof}
\begin{Prop}
Let $M$ be a G--Gorenstein $R$--module. Suppose that $N$ is a
Cohen--Macaulay $R$--module of finite injective or projective
dimension and that $dim_R N=s$. Then the following hold.
\begin{itemize}
\item[(i)] $\Ext_R^i(N, M)=0$ for all $i\neq{d-s}$,
\item[(ii)] $\Ext_R^{d-s} (N, M)$ is a Cohen--Macaulay
$R$--module of dimension $s$.
\end{itemize}
\end{Prop}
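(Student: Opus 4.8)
The plan is to reduce to the local, reduced situation first, then prove both statements by a dimension-shifting induction on $s$ that uses the key structural fact that $M$ is maximal Cohen--Macaulay with $\Gid_R M=d$. Since $M$ is G--Gorenstein, Theorem 3.16 gives $\depth_R M=\dim_R M=\Gid_R M=\depth R=d$, so $R$ is Cohen--Macaulay of dimension $d$ and $M$ is a maximal Cohen--Macaulay module of finite Gorenstein injective dimension $d$. The guiding principle is that $\Ext_R^i(N,M)$ should behave exactly as it does when $M$ is a canonical (dualizing) module: namely it concentrates in the single degree $i=d-s$ and $\Ext_R^{d-s}(N,M)$ is again maximal Cohen--Macaulay of dimension $s$, i.e.\ Cohen--Macaulay of the expected codimension. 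The finite injective/projective dimension of $N$ is what forces this rigidity.

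First I would establish part (i) by induction on $s=\dim_R N$. For the base case $s=0$, the module $N$ has finite length, and finite injective or projective dimension over the Cohen--Macaulay local ring $R$ forces (via Auslander--Buchsbaum or the Bass formula) that $\id_R N$ or $\pd_R N$ equals $d$; one then checks that $\Ext_R^i(N,M)$ vanishes for $i\neq d$. For the inductive step, choose an element $x\in\g m$ that is a nonzerodivisor on both $R$ and $N$ and on the relevant Ext modules—this is possible by prime avoidance since $N$ is Cohen--Macaulay of positive dimension and $M$ has positive depth. Passing to $\bar R=R/xR$, the module $\bar N=N/xN$ is Cohen--Macaulay of dimension $s-1$ and retains finite injective or projective dimension, while the short exact sequence $0\to N\xrightarrow{x}N\to\bar N\to 0$ yields a long exact sequence in $\Ext_R^\bullet(-,M)$. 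Applying the inductive hypothesis to $\bar N$ (after relating $\Ext_R^i(\bar N,M)$ to $\Ext_{\bar R}^{i-1}(\bar N,\bar M)$, where $\bar M=M/xM$ is G--Gorenstein over $\bar R$ by Theorem 3.6) pins down the vanishing of $\Ext_R^i(N,M)$ away from $i=d-s$.

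For part (ii), once the concentration in degree $d-s$ is known, I would compute the dimension and Cohen--Macaulayness of $E:=\Ext_R^{d-s}(N,M)$. The same nonzerodivisor $x$ chosen above acts compatibly, and the long exact sequence collapses (because the neighboring Ext groups vanish) to a short exact sequence $0\to E\xrightarrow{x}E\to\Ext_{\bar R}^{(d-1)-(s-1)}(\bar N,\bar M)\to 0$. This identifies $E/xE$ with the corresponding top Ext module over $\bar R$, which by induction is Cohen--Macaulay of dimension $s-1$, and simultaneously shows $x$ is a nonzerodivisor on $E$. Hence $\depth_R E\geq 1+\depth_{\bar R}(E/xE)=1+(s-1)=s$, while a support/annihilator computation bounds $\dim_R E\leq s$; equality then gives that $E$ is Cohen--Macaulay of dimension exactly $s$.

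**The hard part will be** the careful bookkeeping in the change-of-rings step: one must verify that $\bar M=M/xM$ is genuinely G--Gorenstein over $\bar R$ (which requires $x$ to be both an $M$--sequence and an $R$--sequence so that Theorem 3.6 applies) and that the isomorphisms $\Ext_R^i(\bar N,M)\cong\Ext_{\bar R}^{i-1}(\bar N,\bar M)$ of the Rees-type change-of-rings hold in the form needed, with the degree shift lining up correctly so that $d-s$ over $R$ matches $(d-1)-(s-1)$ over $\bar R$. A secondary subtlety is ensuring that the chosen $x$ simultaneously avoids the associated primes of $R$, $N$, and $E$, which one handles by prime avoidance together with the fact that $\Ass_R E\subseteq\Supp_R N$ so that $x$ being $N$--regular suffices to make it $E$--regular; the vanishing of the neighboring Ext groups is exactly what makes this last implication work.
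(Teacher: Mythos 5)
Your proof works, and its engine is the same as the paper's: induct on $s=\dim_R N$, choose $x\in\g m$ regular on $N$, and use the long exact sequence coming from $0\to N\xrightarrow{x}N\to N/xN\to 0$ together with Nakayama (for (i)) and the resulting short exact sequence $0\to E\xrightarrow{x}E\to\Ext_R^{d-s+1}(N/xN,M)\to 0$ (for (ii)). The main divergence is in how the inductive hypothesis is invoked. The paper never leaves $R$: it applies the hypothesis directly to $N/xN$, which is still a Cohen--Macaulay $R$--module of dimension $s-1$ and of finite injective or projective dimension over $R$, so $x$ only needs to be $N$--regular and the pair $(R,M)$ stays fixed throughout. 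You instead descend to $\bar R=R/xR$ and $\bar M=M/xM$ via Theorem 3.6 and the Rees change-of-rings isomorphism; this is valid but requires $x$ to be simultaneously $R$--, $M$-- and $N$--regular and forces all the degree-shift and finiteness-preservation bookkeeping that you yourself identify as the hard part --- work the paper's formulation simply avoids. A second, smaller difference: the paper disposes of the range $i<d-s$ once and for all by a grade argument ($\gr(\Ann_R\Ext_R^j(N,M),M)\geq d-s$ via \cite[1.2.10(e)]{B-H}, using that $M$ is maximal Cohen--Macaulay), whereas you fold both ranges into the induction; either works.

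One soft spot to repair: in your base case $s=0$ you attribute the vanishing of $\Ext_R^i(N,M)$ for $i>d$ to the Bass formula $\id_R N=d$. Finiteness of the injective dimension of $N$ does not by itself bound the degrees in which $\Ext_R^\bullet(N,-)$ can be nonzero; what is actually needed (and what the paper cites, via \cite[6.2.11]{Ch} and, elsewhere, \cite[2.22]{Hb}) is that $\Gid_R M=d<\infty$ forces $\Ext_R^i(L,M)=0$ for all $i>d$ and every $L$ of finite injective \emph{or} projective dimension. This is the one place where the finiteness of the Gorenstein injective dimension of $M$, rather than mere maximal Cohen--Macaulayness, genuinely enters, so it should be stated explicitly.
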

\begin{proof}
(i) It follows from \cite[1.2.10(e)]{B-H} that $\Ext_R^i(N, M)=0$
for all $i<{d-s}$. Next we use induction on $s$ to show that
$\Ext_R^i(N, M)=0$ for all $i>d-s$. If $s=0$, then the result
follows from \cite[6.3]{Ch-F-H} , \cite[19.1]{M} and
\cite[6.2.11]{Ch}. Suppose that $s>0$ and that $x\in \g m$ is a
non-zero divisor on $N$. Consider the exact sequence
\vspace{0.4cm}\begin{center} $\cdots \to
\Ext_R^i(N,M)\xrightarrow{x} \Ext_R^i(N, M)\to \Ext_R^{i+1}(N/xN,
M) \to \cdots$ \end{center} \vspace{0.4cm} and use induction
together with Nakayama's lemma to complete the proof.
\par (ii) We prove this by induction on $s$.
There is nothing to prove in the case where $s=0$. Suppose that
$s>0$ and that $x\in\g m$ is a non-zero divisor on $N$. Then, by
(i), we have the exact sequence \vspace{0.4cm}
\begin{center}
$0\to \Ext_R^{d-s}(N, M)\xrightarrow{x} \Ext_R^{d-s}(N, M)\to
\Ext_R^{d-s+1}(N/xN, M)\to0$.
\end{center} \vspace{0.4cm}
Now, $x$ is a non-zero divisor on $\Ext_R^{d-s}(N, M)$, and so the
assertion follows from the induction hypothesis.
\end{proof}
\begin{Prop}
Suppose that $N$ is a Gorenstein $R$--module and that $M$ is a
maximal Cohen--Macaulay $R$--module with $\Gfd_R M<\infty$. Then
$\Hom_R(M, N)$ is G--Gorenstein.
\end{Prop}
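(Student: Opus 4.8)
The plan is to reduce the assertion to the canonical dual of $M$ and then to invoke 3.8, which says that a maximal Cohen--Macaulay module of finite Gorenstein injective dimension is G--Gorenstein. Since $R$ is Cohen--Macaulay local with dualizing module $\omega$ and $N$ is a Gorenstein module, $N$ is maximal Cohen--Macaulay of finite injective dimension, so $N\cong\omega^n$ for some $n\geq1$ by the structure theorem for Gorenstein modules (cf. \cite{Shb} and \cite[3.3.28]{B-H}). Hence $\Hom_R(M,N)\cong\Hom_R(M,\omega)^n$, and it suffices to prove that $\Hom_R(M,\omega)$ is maximal Cohen--Macaulay with $\Gid_R\Hom_R(M,\omega)<\infty$; both properties pass to the finite direct sum $\Hom_R(M,\omega)^n$ (for the Gorenstein injective dimension one resolves $\Hom_R(M,\omega)$ and takes $n$ copies, each term of the resulting complex being Gorenstein injective by \cite[6.9]{Ch-F-H}), after which 3.8 yields that $\Hom_R(M,N)$ is G--Gorenstein.

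The maximal Cohen--Macaulay property is routine. As $M$ is maximal Cohen--Macaulay over the Cohen--Macaulay local ring $R$ with dualizing module $\omega$, one has $\Ext_R^i(M,\omega)=0$ for all $i>0$, and the canonical dual $\Hom_R(M,\omega)$ is again maximal Cohen--Macaulay with $\Hom_R(\Hom_R(M,\omega),\omega)\cong M$ (see \cite[3.3.10]{B-H}); consequently $\Hom_R(M,N)$ is maximal Cohen--Macaulay.

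The substantive step, which I expect to be the main obstacle, is the finiteness of $\Gid_R\Hom_R(M,\omega)$; this is a Foxby--duality phenomenon that converts the hypothesis $\Gfd_R M<\infty$ into information about the injective side of the dual. Because $M$ is finitely generated with finite Gorenstein flat dimension, it lies in the Auslander class of $R$ and has finite Gorenstein dimension; the Auslander--Bridger formula, together with $\depth_R M=\depth R$ (valid since $M$ is maximal Cohen--Macaulay over the Cohen--Macaulay ring $R$), forces that dimension to be $0$, so $M$ is totally reflexive. Dagger duality then carries $M$ from the Auslander class to the Bass class $\mathcal{I}_0(R)$ of 3.13: concretely $\Hom_R(M,\omega)\cong\omega\otimes_R\Hom_R(M,R)$, the obstructing $\Tor$'s vanishing because every syzygy occurring in a complete resolution of $M$ is totally reflexive and hence lies in the Auslander class, so that the Foxby functor $\omega\otimes_R-$ applies and lands in $\mathcal{I}_0(R)$. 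Since membership in the Bass class is equivalent to finite Gorenstein injective dimension (\cite[10.4.23]{E-Jb}; see also \cite{E-J-X}), we obtain $\Gid_R\Hom_R(M,\omega)<\infty$.

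Assembling these, $\Hom_R(M,N)\cong\Hom_R(M,\omega)^n$ is maximal Cohen--Macaulay of finite Gorenstein injective dimension, and 3.8 shows that it is G--Gorenstein. The only delicate point is the Bass--class membership of $\Hom_R(M,\omega)$; should one prefer to avoid dagger duality, the three defining conditions of $\mathcal{I}_0(R)$ can be checked directly for $\Hom_R(M,\omega)$ by applying $\Hom_R(-,\omega)$ and $\omega\otimes_R-$ to a complete resolution of the totally reflexive module $M$ and using that $\Ext_R^{i}(-,\omega)$ vanishes in positive degrees on maximal Cohen--Macaulay modules.
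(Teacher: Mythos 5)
Your proposal is correct and follows the same skeleton as the paper's proof: show that $\Hom_R(M,N)$ is maximal Cohen--Macaulay of finite Gorenstein injective dimension and then invoke Theorem 3.8. The difference is in how the finiteness of $\Gid_R\Hom_R(M,N)$ is obtained. The paper gets it in one line from \cite[2.8(c)]{Ch-H}, which asserts directly that $\id_R N<\infty$ together with $\Gfd_R M<\infty$ forces $\Gid_R\Hom_R(M,N)<\infty$, and it applies \cite[3.3.3]{B-H} to $\Hom_R(M,N)$ without first decomposing $N$. You instead decompose $N\cong\omega^n$, use the Auslander--Bridger formula to see that the finitely generated module $M$ of finite Gorenstein flat (equivalently, under the standing dualizing-complex hypothesis, finite Gorenstein projective) dimension is totally reflexive, identify $\Hom_R(M,\omega)$ with $\omega\otimes_R\Hom_R(M,R)$, and place the latter in the Bass class $\mathcal{I}_0(R)$ via Foxby duality, so that \cite[10.4.23]{E-Jb} yields finite Gorenstein injective dimension. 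This is in effect an unpacking of the Christensen--Holm citation in the special case at hand: it is longer, but it stays within the Foxby-equivalence machinery already used in the proof of Theorem 3.14 and makes visible where maximal Cohen--Macaulayness of $M$ enters (to reduce finite Gorenstein dimension to total reflexivity). The auxiliary facts you invoke --- that $\Hom_R(M,R)$ is again totally reflexive, hence in the Auslander class, and that the natural map $\omega\otimes_R\Hom_R(M,R)\to\Hom_R(M,\omega)$ is an isomorphism for totally reflexive $M$ --- are standard and can be checked on a complete resolution as you indicate, so I see no gap.
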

\begin{proof}
Since $\id_R N<\infty$ and $\Gfd_R M<\infty$, it follows from
\cite[2.8(c)]{Ch-H} that $\Hom_R(M, N)$ has finite Gorenstein
injective dimension. Therefore, since, by \cite[3.3.3]{B-H},
$\Hom_R(M, N)$ is maximal Cohen--Macaulay, the assertion follows
from 3.8.
\end{proof}
\begin{Prop}
Suppose that $N$ is a G--Gorenstein $R$--module and that $M$ is a
maximal Cohen--Macaulay $R$--module such that the injective
dimension of $M$, $\id_R M$, is finite. Then $\Hom_R(M, N)$ is
strongly torsion free.
\end{Prop}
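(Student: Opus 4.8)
The plan is to reduce the problem to the canonical module $\omega$ and then to recognize the relevant $\Hom$-module as a Gorenstein projective module, which one checks is strongly torsion free.

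First I would exploit the hypotheses on $M$. Having finite injective dimension, $M$ lies in the Bass class $\mathcal I_0(R)$ (\cite{E-J-X}), so the natural map $\omega\otimes_R P\to M$ is an isomorphism, where $P=\Hom_R(\omega,M)$, and $\Tor^R_i(\omega,P)=0$ for all $i>0$. The support formula gives $\Supp_R P=\Supp_R M$, so $\dim_R P=d$; since Foxby duality preserves depth, $\depth_R P=\depth_R M=d$, and thus $P$ is maximal Cohen--Macaulay. Foxby duality also turns the finiteness of $\id_R M$ into the finiteness of $\pd_R P$ (\cite{E-J-X}), whence the Auslander--Buchsbaum formula (\cite{B-H}) yields $\pd_R P=\depth R-\depth_R P=0$. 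Therefore $P\cong R^{t}$ and $M\cong\omega^{t}$ for some $t\geq 1$, so that $\Hom_R(M,N)\cong\Hom_R(\omega,N)^{t}$. As $\Tor^R_1(F,-)$ commutes with finite direct sums, it suffices to prove that $A:=\Hom_R(\omega,N)$ is strongly torsion free.

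Next I would analyse $A$ through the hypothesis on $N$. By Theorem 3.15, $N$ is maximal Cohen--Macaulay with $\Gid_R N<\infty$, so by \cite[10.4.23]{E-Jb} and \cite{Ch-F-H} it belongs to $\mathcal I_0(R)$; in particular $\omega\otimes_R A\cong N$ with $\Tor^R_i(\omega,A)=0$ for $i>0$, and, exactly as for $P$ above (now with $N$ in place of $M$), $A$ is maximal Cohen--Macaulay. Moreover, Foxby duality for Gorenstein dimensions (\cite{E-J-X},\cite{Ch-F-H}) transports the finiteness of $\Gid_R N$ into the finiteness of $\Gpd_R A$. The Auslander--Bridger formula (\cite{Hb}) then gives $\Gpd_R A=\depth R-\depth_R A=0$, so $A$ is a finitely generated Gorenstein projective module.

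It remains to observe that any Gorenstein projective module is strongly torsion free. Fix a complete projective resolution $\cdots\to P_1\to P_0\to P_{-1}\to P_{-2}\to\cdots$ with $A=\Image(P_0\to P_{-1})$, set $C_0=A$, and let $C_{n+1}=\Image(P_{-n-1}\to P_{-n-2})$ be the cosyzygies, which are again Gorenstein projective. For any $F$ with $\fd_R F<\infty$, the short exact sequences $0\to C_n\to P_{-n-1}\to C_{n+1}\to 0$, together with $\Tor^R_i(P_{-j},F)=0$ for $i\geq 1$, give by dimension shifting $\Tor^R_1(F,A)\cong\Tor^R_{1+n}(F,C_n)$ for every $n\geq 0$; choosing $n$ with $1+n>\fd_R F$ forces $\Tor^R_1(F,A)=0$. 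Hence $A$, and therefore $\Hom_R(M,N)\cong A^{t}$, is strongly torsion free.

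The step I expect to be the main obstacle is the middle one: correctly invoking Foxby duality so as to know that $A=\Hom_R(\omega,N)$ is simultaneously maximal Cohen--Macaulay and of finite Gorenstein projective dimension, hence (by Auslander--Bridger) genuinely Gorenstein projective. Once this structural fact is secured, both the reduction to $\omega$ in the first step and the concluding dimension shift are routine.
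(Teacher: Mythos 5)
Your proof is correct, but it takes a genuinely different route from the paper's. The paper's argument is a chain of three citations: $\Hom_R(M,N)$ is maximal Cohen--Macaulay; since $\id_R M<\infty$ and $\Gid_R N<\infty$, the ascent result \cite[3.5(c)]{Ch-H} gives $\Gfd_R \Hom_R(M,N)<\infty$; and a maximal Cohen--Macaulay module of finite Gorenstein flat dimension is strongly torsion free by \cite[2.8]{Ch-F-F}. You instead make the structure of $\Hom_R(M,N)$ explicit: your Bass-class/Foxby/Auslander--Buchsbaum argument identifying $M$ with $\omega^{t}$ is a proof of the classical characterization of maximal Cohen--Macaulay modules of finite injective dimension (cf. \cite[3.3.28]{B-H}); Foxby duality for Gorenstein dimensions together with the Auslander--Bridger formula then exhibits $A=\Hom_R(\omega,N)$ as a finitely generated Gorenstein projective module; and your concluding dimension shift along a complete projective resolution is a correct, self-contained verification that such modules are strongly torsion free. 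What your approach buys is transparency --- $\Hom_R(M,N)$ is literally a finite direct sum of copies of a totally reflexive module, and the vanishing of $\Tor_1$ is proved by hand rather than quoted --- at the price of invoking the Foxby machinery twice and relying on the depth-preservation of the functors $\Hom_R(\omega,-)$ and $\omega\otimes_R-$ on the Bass and Auslander classes; that depth-preservation is standard but is the one step you should anchor with a precise reference (e.g., \cite{Ch} or \cite{E-J-X}), since it is exactly what turns finiteness of $\pd_R P$ and of $\Gpd_R A$ into the conclusions $\pd_R P=0$ and $\Gpd_R A=0$. The paper's route is shorter and avoids decomposing $M$, but conceals the mechanism your proof lays bare.
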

\begin{proof}
By 3.16, $\Hom_R(M,N)$ is maximal Cohen--Macaulay. Now, since $\id_R
M<\infty$ and $\Gid_R N<\infty$, it follows from \cite[3.5(c)]{Ch-H}
that $\Hom_R(M,N)$ is of finite Gorenstein flat dimension.
Therefore, the assertion follows immediately from
\cite[2.8]{Ch-F-F}.
\end{proof}
\begin{Thm}
Let $M$ be a finitely generated Gorenstein projective $R$--module of
finite Gorenstein injective dimension. Then the following hold.
\begin{itemize}
\item[(i)] $M$ is G--Gorenstein.
\item[(ii)] $\lh_{\g m}^d (M)$ is Gorenstein injective.
\end{itemize}
\end{Thm}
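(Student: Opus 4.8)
The plan is to deduce both statements from Theorem~3.8 by first showing that $M$ is maximal Cohen--Macaulay. Recall that we are working under the standing hypothesis that $(R,\g m)$ is a Cohen--Macaulay local ring of dimension $d$ admitting a dualizing complex, so in particular $\depth R=\dim R=d$. We may assume $M\neq 0$. Since $M$ is a finitely generated Gorenstein projective module, its Auslander--Bridger G--dimension is zero (see \cite{Ch}); hence the Auslander--Bridger formula gives $\depth_R M=\depth R=d$. Combining this with $\depth_R M\leq\dim_R M\leq\dim R=d$, we obtain $\depth_R M=\dim_R M=d$, so that $M$ is a Cohen--Macaulay module of dimension $d$, i.e. maximal Cohen--Macaulay.

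With this reduction in hand the theorem is essentially a corollary of Theorem~3.8. Indeed, $M$ is now a maximal Cohen--Macaulay $R$--module and, by hypothesis, $\Gid_R M<\infty$; since $R$ is local and admits a dualizing complex, the hypotheses of 3.8 are met. Applying 3.8 directly yields that $M$ is G--Gorenstein, which is part (i), while its concluding ``in particular'' assertion gives that $\lh_{\g m}^d(M)$ is Gorenstein injective, which is part (ii). No further computation is required once maximal Cohen--Macaulayness is known.

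Accordingly, the only real content lies in the first paragraph, and the main obstacle is the passage from ``finitely generated Gorenstein projective'' to ``maximal Cohen--Macaulay''. This rests on two standard facts over a Noetherian local ring: that a non-zero finitely generated Gorenstein projective module has G--dimension zero, and the Auslander--Bridger depth formula relating G--dimension and depth. Both are available in the literature cited in the introduction, so the argument is short; everything else is an immediate appeal to 3.8.
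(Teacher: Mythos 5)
Your proof is correct and follows essentially the same route as the paper: reduce to Theorem 3.8 by showing that $M$ is maximal Cohen--Macaulay, then read off both assertions. The only differences are cosmetic --- the paper cites \cite[10.2.7]{E-Jb} for the maximal Cohen--Macaulay step where you derive it from G--dimension zero together with the Auslander--Bridger formula, and for (ii) the paper spells out the identification $\lh_{\g m}^d(M)\cong M^d$ via the main theorem of \cite{Shc} rather than quoting the ``in particular'' clause of 3.8 directly.
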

\begin{proof}
(i) By \cite[10.2.7]{E-Jb}, $M$ is a maximal Cohen--Macaulay
$R$--module. Hence, as $\Gid_R M<\infty$, $M$ is G--Gorenstein by
3.8. \par (ii) By the main theorem of \cite{Shc}, we have $\lh_{\g
m}^d (M)\cong M^d$, where $M^d$ is the $d$--th term of $C(M)$; and
so the assertion is an immediate consequence of (i).
\end{proof}
\vspace{0.3cm} \par Notice that the assertion (ii) of the above
theorem recovers the result \cite[2.7]{S} which is proved under the
condition that $R$ is Gorenstein.
\par
\textbf{Remark}. Note that if $R$ is a G--Gorenstein $R$--module,
then, by \cite[2.1]{Ha}, one can see that $R$ is a Gorenstein ring.
Therefore we are not going to define the G--Gorenstein ring.
\vspace{0.3cm}

\section{\textbf{Balanced big Cohen--Macaulay modules}}

\vspace{0.5cm} In the proof of the next lemma, we use the notion
finitistic injective dimension of $R$, denoted by FID($R$), which
is defined as
\par
\vspace{0.5cm} FID($R$)=sup\big\{${\id_RM \big{|} M}$ is an
$R$--module of finite injective dimension\big\}.
\par
\vspace{0.5cm}
\begin{Lem}
Suppose that $R$ admits a dualizing complex and that $M$ is an
$R$--module. Then the following conditions are equivalent.
\begin{itemize}
\item[(i)] $\Gid_R M < \infty$.
\item[(ii)] $\Gid_R M \leq dimR$.
\end{itemize}
\end{Lem}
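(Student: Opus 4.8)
The plan is to dispose of the trivial implication first and then concentrate on the bound. Since $R$ admits a dualizing complex, $R$ has finite Krull dimension, so $\dim R<\infty$; consequently (ii)$\Rightarrow$(i) is immediate, as $\Gid_R M\le\dim R<\infty$. The whole content of the lemma is therefore the reverse implication (i)$\Rightarrow$(ii), i.e.\ that finiteness of the Gorenstein injective dimension already forces $\Gid_R M\le\dim R$.

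For (i)$\Rightarrow$(ii) I would factor the desired inequality through the finitistic injective dimension introduced just above the statement, proving
\[
\Gid_R M\le \operatorname{FID}(R)\le\dim R .
\]
The second inequality is the classical fact (a consequence of Chouinard's formula $\id_R X=\sup\{\depth R_{\g p}-\operatorname{width}_{R_{\g p}}X_{\g p}\}$, valid whenever $\id_R X<\infty$) that over a Noetherian ring of finite Krull dimension every module of finite injective dimension has injective dimension at most $\dim R$; hence $\operatorname{FID}(R)\le\dim R$. The first inequality is the Gorenstein analogue: for a module of finite Gorenstein injective dimension over a ring admitting a dualizing complex one has $\Gid_R M\le\operatorname{FID}(R)$, which I would quote from \cite{Hb} and \cite{Ch-F-H}. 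Combining the two inequalities gives $\Gid_R M\le\dim R$, as required.

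Alternatively, and more directly, one may invoke the Gorenstein Chouinard equality of \cite{Ch-F-H}: when $\Gid_R M<\infty$,
\[
\Gid_R M=\sup\{\,\depth R_{\g p}-\operatorname{width}_{R_{\g p}}M_{\g p}\ :\ \g p\in\Spec R\,\}.
\]
Since width is always non-negative and $\depth R_{\g p}\le\dim R_{\g p}\le\dim R$ for every $\g p$, each term of the supremum is bounded above by $\dim R$, and the conclusion follows at once.

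The only real obstacle is the first inequality $\Gid_R M\le\operatorname{FID}(R)$ (equivalently, the Gorenstein Chouinard equality), because $M$ is \emph{not} assumed finitely generated, so the Bass-type identity $\Gid_R M=\depth R$ used earlier (from \cite[6.3]{Ch-F-H}) is unavailable here; one genuinely needs the version of these results valid for arbitrary modules, which is exactly where the dualizing complex hypothesis enters. Once that input is granted, the bound on $\operatorname{FID}(R)$ and hence the whole implication are routine.
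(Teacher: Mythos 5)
Your proposal is correct and follows essentially the same route as the paper: the implication (ii)$\Rightarrow$(i) via finiteness of $\dim R$ from the dualizing complex, and (i)$\Rightarrow$(ii) via the chain $\Gid_R M\le \mathrm{FID}(R)\le \dim R$, where the paper cites \cite[3.3]{Ch-F-H} for the first inequality and Bass together with Raynaud--Gruson (rather than Chouinard's formula) for the second. The difference in how you justify $\mathrm{FID}(R)\le\dim R$ is only a matter of which classical reference is invoked, not of substance.
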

\begin{proof}
(i)$\Rightarrow$ (ii). We have $\Gid_R M \leq$ FID($R$) by
\cite[3.3]{Ch-F-H}. Therefore the assertion follows from
\cite[5.5]{B} and \cite[II. Theorem 3.2.6]{R-G}. (ii)
$\Rightarrow$ (i). Since $R$ admits a dualizing complex, we see
by \cite[V.7.2]{H} that $dim R$ is finite; so that $M$ has finite
Gorenstein injective dimension.
\end{proof}

\begin{Thm}
Suppose that $R$ is a local ring of Krull dimension $d$, which
admits a dualizing complex and that $M$ is a balanced big
Cohen--Macaulay $R$--module with $\Gid_R M<\infty$. Then
$C(\mathcal{D}(R), M)$ provides a Gorenstein injective resolution
for $M$.
\end{Thm}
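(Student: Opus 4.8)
The plan is to verify the two requirements of Definition 2.2 for $C(\mathcal{D}(R),M)$ separately. Writing the complex as $0\to M\xrightarrow{d^{-1}} M^0\xrightarrow{d^0} M^1\to\cdots$, with $n$-th term $M^n=\bigoplus_{\g p\in\partial D_n}(\coker d^{n-2})_{\g p}$ where $\partial D_n=\{\g p:\dim R/\g p=d-n\}$, I would first settle exactness and then the Gorenstein injectivity of the terms. Exactness is exactly where the balanced big Cohen--Macaulay hypothesis is used: for such an $M$ the Cousin complex taken with respect to the dimension filtration is a resolution of $M$ (the balanced big Cohen--Macaulay analogue, cf. \cite{Shd}, of the fact \cite[2.4]{Shb} that the height--filtration Cousin complex of a finitely generated module is exact precisely when that module is Cohen--Macaulay). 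Granting this, the entire remaining task is to show every $M^n$ is Gorenstein injective over $R$, and this is where $\Gid_R M<\infty$ and the dualizing complex enter.

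Mirroring Theorem 3.8, I would prove by induction on $n$ that $(\coker d^{n-2})_{\g p}$ is Gorenstein injective over $R_{\g p}$ for every $\g p\in\partial D_n$. The key numerical input is the uniform bound $\Gid_{R_{\g p}} M_{\g p}\le n$ on $\partial D_n$. Indeed $\Gid_{R_{\g p}} M_{\g p}<\infty$ by \cite[5.5]{Ch-F-H}; since $R_{\g p}$ again admits a dualizing complex, Lemma 4.1 applied over $R_{\g p}$ gives $\Gid_{R_{\g p}} M_{\g p}\le\dim R_{\g p}=\hit\g p$; and since $\hit\g p+\dim R/\g p\le d$ while $\dim R/\g p=d-n$ on $\partial D_n$, one has $\hit\g p\le n$, whence $\Gid_{R_{\g p}} M_{\g p}\le n$. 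For $n=0$ this forces $\Gid_{R_{\g p}} M_{\g p}\le 0$, so $M_{\g p}$ is Gorenstein injective over $R_{\g p}$, which settles the base case.

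For the inductive step, fix $\g p\in\partial D_n$ and localize the exact truncation $0\to M\to M^0\to\cdots\to M^{n-1}\to\coker d^{n-2}\to 0$ at $\g p$; flatness of localization keeps it exact. Its intermediate terms are $(M^j)_{\g p}=\bigoplus_{\g q\in\partial D_j,\ \g q\subseteq\g p}(\coker d^{j-2})_{\g q}$ for $j<n$, and each summand is Gorenstein injective over $R_{\g q}$ by the induction hypothesis, hence over $R_{\g p}$ by Lemma 3.2 (as $R_{\g q}$ is a localization of $R_{\g p}$), so $(M^j)_{\g p}$ is Gorenstein injective over $R_{\g p}$ by \cite[6.9]{Ch-F-H}. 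Since also $\Gid_{R_{\g p}} M_{\g p}\le n$, the exact sequence $0\to M_{\g p}\to (M^0)_{\g p}\to\cdots\to(M^{n-1})_{\g p}\to(\coker d^{n-2})_{\g p}\to 0$ shows, by \cite[3.3]{Ch-F-H}, that $(\coker d^{n-2})_{\g p}$ is Gorenstein injective over $R_{\g p}$, closing the induction. Finally, for each fixed $n$ every summand $(\coker d^{n-2})_{\g p}$ of $M^n$ is Gorenstein injective over $R$ by Lemma 3.2, so $M^n$ is Gorenstein injective over $R$ by \cite[6.9]{Ch-F-H}; together with exactness this makes $C(\mathcal{D}(R),M)$ a Gorenstein injective resolution of $M$.

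The main obstacle is that $M$ is not assumed finitely generated, which disables much of the standard machinery. Exactness can no longer be read off from Sharp's Cohen--Macaulay criterion \cite[2.4]{Shb} and must instead be supplied by the theory of balanced big Cohen--Macaulay modules relative to the dimension filtration; and the convenient Bass equality $\Gid_{R_{\g p}} M_{\g p}=\depth R_{\g p}$ is unavailable, so the crucial bound $\Gid_{R_{\g p}} M_{\g p}\le n$ has to be extracted from the dimension bound of Lemma 4.1 together with the height inequality rather than from a depth computation. A secondary subtlety is that localizing the dimension filtration at $\g p$ does not return the dimension filtration of $R_{\g p}$; this is harmless here, since the induction uses only that the localized complex is exact with the displayed terms, so the mismatch can be bypassed rather than analysed.
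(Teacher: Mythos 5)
Your proposal is correct and follows essentially the same route as the paper: exactness from the balanced big Cohen--Macaulay characterization of \cite{Shd}, the bound $\Gid_{R_{\g p}}M_{\g p}\le\dim R_{\g p}\le n$ via \cite[5.5]{Ch-F-H} and Lemma 4.1, and then induction using \cite[3.3]{Ch-F-H}, Lemma 3.2 and \cite[6.9]{Ch-F-H}. Your version is in fact slightly more careful than the paper's at the inductive step, where you make explicit that $(M^j)_{\g p}$ decomposes into summands Gorenstein injective over $R_{\g p}$ (the paper phrases its induction on the modules $M^j$ over $R$ and leaves this localization point implicit), but the substance is identical.
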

\begin{proof}
Write $C(\mathcal{D}(R), M)$ as  \vspace{0.3cm}
\begin{center}
$0\to M\xrightarrow{d^{-1}}\ M^0\xrightarrow{d^0}\ M^1\to \cdots\to\
M^n\xrightarrow{d^n}\to M^{n+1}\to\cdots$,
\end{center} \vspace{0.3cm}
where $M^n=\bigoplus_{dimR/\g p=d-n} (\coker d^{n-2})_\g p$. Since
$M$ is balanced big Cohen--Macaulay, $C(\mathcal{D}(R), M)$ is exact
by \cite[4.1]{Shd}. Therefore it is enough to prove that $M^n$ is
Gorenstein injective for all $n\geq 0$. To this end, we proceed by
induction on $n$. If $n=0$, then we have $M^0=\bigoplus_{dim{R/{\g
p}}=d} {M_{\g p}}$. Thus, for each $\g p\in\Spec(R)$ with $dim R/\g
p=0$, we have by \cite[5.5]{Ch-F-H}, that $\Gid_{R_{\g p}} M_{\g
p}\leq \Gid_R M<\infty$; and so, by 4.1, $\Gid_{R_{\g p}} M_{\g
p}\leq \dim R_{\g p}=0$. Therefore, according to 3.2, $M_{\g p}$ is
a Gorenstein injective $R$--module. Hence, in view of
\cite[10.1.4]{E-Jb}, we see that $M^0$ is Gorenstein injective. Now,
let $n>0$ and suppose that the result has been proved for smaller
values of $n$. Let $\g p\in\Spec(R)$ with $dimR/{\g p}=d-n$. We
obtain the exact sequence
\par
\vspace{0.4cm} \hspace{2.5cm} $0\to M_\g p\to (M^0)_\g p\to\cdots\to
(M^{n-1})_\g p\to (\coker d^{n-2})_\g p\to0$
\hspace{2.6cm}\vspace{0.4cm} $(\ast)$  from $C(\mathcal{D}(R), M)$.
Since $dimR_\g p\leq n$, we have $\Gid_{R_\g p} M_\g p\leq n$ by 4.1
and \cite[5.5]{Ch-F-H}. Therefore, using $(\ast)$ in conjunction
with \cite[3.3]{Ch-F-H} and the inductive hypothesis, we see that
$(\coker d^{n-2})_\g p$ is a Gorenstein injective $R_\g p$--module.
Hence $M^n$ is a Gorenstein injective $R$--module by 3.2 and
\cite[6.9]{Ch-F-H}.
\end{proof}
\begin{Cor}
Let $R$ and $M$ be as in the above theorem. Then $\lh_\g m^d (M)$
is a Gorenstein injective $R$--module.
\end{Cor}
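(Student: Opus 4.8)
The plan is to extract the conclusion directly from the resolution built in the preceding theorem, using Sharp's identification of the top term of the Cousin complex with the top local cohomology module. First I would look at the shape of the terms recorded in the statement of the above theorem: the $n$-th term is $M^n=\bigoplus_{\dim R/\g p=d-n}(\coker d^{n-2})_\g p$. For $n=d$ the indexing condition $\dim R/\g p=0$ picks out exactly the maximal ideal $\g m$ (the unique prime of $R$ with zero-dimensional quotient, since $R$ is local), so the top term is $M^d=(\coker d^{d-2})_\g m$.

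By the above theorem, $C(\mathcal{D}(R),M)$ is a Gorenstein injective resolution of $M$; in particular each $M^n$ is a Gorenstein injective $R$--module, and so, in particular, is $M^d$. It therefore remains only to identify $M^d$ with $\lh_\g m^d(M)$. This is exactly the content of the main theorem of \cite{Shc}, which for the dimension filtration over the local ring $R$ of dimension $d$ yields a natural isomorphism $\lh_\g m^d(M)\cong M^d$; the same isomorphism was already invoked in 3.8 and 3.18. Combining these two facts gives that $\lh_\g m^d(M)$ is Gorenstein injective, as claimed.

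The one point that deserves care is that $M$ is here a (possibly non--finitely generated) balanced big Cohen--Macaulay module, whereas the identification of the top Cousin term with top local cohomology is most often quoted for finitely generated modules. I expect this to be the only real obstacle, and I would address it by checking that the cited theorem of \cite{Shc} is formulated for an arbitrary module carrying the dimension filtration; its proof rests on the exactness of the lower part of the Cousin complex, which the balanced big Cohen--Macaulay hypothesis secures via \cite[4.1]{Shd} (as already exploited in the proof of the above theorem). Granting this, the corollary is immediate.
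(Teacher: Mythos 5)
Your proof takes essentially the same route as the paper: Theorem 4.2 supplies the Gorenstein injective resolution $C(\mathcal{D}(R),M)$, and the conclusion follows once the top term is identified with $\lh_{\g m}^d(M)$. The one concern you rightly flag — that $M$ need not be finitely generated — is resolved in the paper by citing \cite[1.8]{Shd} (Sharp's 1982 paper, where the identification is proved precisely for the dimension filtration and arbitrary balanced big Cohen--Macaulay modules) rather than the main theorem of \cite{Shc}.
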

\begin{proof}
By 4.2, $C(\mathcal{D}(R), M)$ is a Gorenstein injective resolution
for $M$. Hence $\lh_\g m^d(M)$ is Gorenstein injective by
\cite[1.8]{Shd}.
\end{proof}
\begin{Cor}
Let $R$ be a Gorenstein local ring of Krull dimension $d$ and let
$M$ be a balanced big Cohen--Macaulay $R$--module. Then
$C(\mathcal{D}(R), M)$ provides a Gorenstein injective resolution
for $M$ and hence $\lh_\g m^d (M)$ is Gorenstein injective.
\end{Cor}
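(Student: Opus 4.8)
The plan is to deduce this corollary directly from Theorem 4.2 and Corollary 4.3, by checking that their hypotheses are met once $R$ is assumed to be Gorenstein. Concretely, I would verify the two standing assumptions of 4.2 --- that $R$ admits a dualizing complex and that $\Gid_R M<\infty$ --- and then read off both conclusions from 4.2 and 4.3 respectively.

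First I would observe that a Gorenstein local ring automatically admits a dualizing complex: indeed $R$ itself, placed in the appropriate degree, serves as a dualizing complex for $R$, so the first hypothesis of 4.2 is free. The substantive step is to establish $\Gid_R M<\infty$. Here I would invoke the structure theory of Gorenstein rings: over a Gorenstein local ring of Krull dimension $d$, every $R$--module --- not merely every finitely generated one --- has Gorenstein injective dimension at most $d$ (see \cite{E-Jb}). Applying this to the balanced big Cohen--Macaulay module $M$ gives $\Gid_R M\leq d<\infty$, as required. Alternatively, one could combine the characterization of Gorenstein rings via the finite injective dimension of $R$ with \cite[3.3]{Ch-F-H} and Lemma 4.1 to obtain the same bound.

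With both hypotheses verified, Theorem 4.2 immediately yields that $C(\mathcal{D}(R),M)$ provides a Gorenstein injective resolution for $M$. The final assertion that $\lh_\g m^d(M)$ is Gorenstein injective then follows at once from Corollary 4.3 --- or, equivalently, by feeding the resolution just obtained into \cite[1.8]{Shd}, which identifies the top local cohomology module with the $d$--th term of the Cousin complex with respect to the dimension filtration.

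The only real obstacle is the finiteness of $\Gid_R M$, and the delicate point there is precisely that $M$ is a (possibly non--finitely--generated) big Cohen--Macaulay module; so one must use the version of the Gorenstein--ring bound valid for arbitrary modules rather than any argument restricted to the finitely generated case. Once that is in hand, the remainder is a formal application of the two preceding results.
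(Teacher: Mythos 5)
Your argument matches the paper's proof: the paper also deduces the corollary from Theorem 4.2 and Corollary 4.3 by invoking \cite[10.1.13]{E-Jb} to get $\Gid_R M<\infty$ for an arbitrary module over a Gorenstein local ring, the dualizing-complex hypothesis being automatic. Your additional remarks about the non--finitely--generated subtlety are correct and consistent with that reference.
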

\begin{proof}
The assertion is an immediate consequence of 4.2, 4.3 and
\cite[10.1.13]{E-Jb}.
\end{proof}
\textbf{Remark}. Let $R$ be a non Gorenstein Cohen--Macaulay local
ring which admits a dualizing complex. Then $R$ is a balanced big
Cohen--Macaulay $R$--module; but $R$ is of infinite Gorenstein
injective dimension by \cite[2.1]{Ha}. \par The following lemma is
assistant in the proof of 4.7, 4.8 and 4.9.
\begin{Lem}
Let $M$ be an $R$--module. Suppose that $C(\mathcal{F}, M)$ is exact
at $M,M^0,M^1,\ldots,M^t$. If $\id_R M<\infty\hspace{0.2cm} (resp.
\fd_R M<\infty)$, then we have $id_R M^i<\infty\hspace{0.2cm} (resp.
\fd_R M^i<\infty)$ for all $i=0,\ldots,t$.
\end{Lem}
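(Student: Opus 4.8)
The plan is to chop the partially exact Cousin complex into short exact sequences and to run an induction on $i$, the only real input being that every term $M^i$ is a direct sum of localizations of the preceding cycle module, together with the fact that such direct sums inherit finite injective (resp. flat) dimension over $R$.

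First I would record the structural description of the terms. Writing $Z^{-1}=M$ and $Z^{j}=\Image d^{j}$, exactness of $C(\mathcal{F},M)$ at $M,M^0,\ldots,M^t$ yields, for each $j$ with $0\le j\le t$, a short exact sequence $0\to Z^{j-1}\to M^{j}\to Z^{j}\to 0$. Moreover, using exactness at $M^{j-1}$ one identifies $\coker d^{j-2}\cong \Image d^{j-1}=Z^{j-1}$, so that the defining formula for the Cousin complex gives $M^{j}=\bigoplus_{\g p\in\partial F_j}(Z^{j-1})_{\g p}$ for all $0\le j\le t$ (with $M^0=\bigoplus_{\g p\in\partial F_0}M_{\g p}$).

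The key lemma I would isolate is: if $\id_R N<\infty$ then $\id_R\big(\bigoplus_{\g p}N_{\g p}\big)<\infty$, the direct sum being taken over any set of primes. Indeed, localizing a finite injective resolution of $N$ gives an injective $R_{\g p}$--resolution of $N_{\g p}$ of length at most $\id_R N$; since an injective $R_{\g p}$--module is injective over $R$ (the observation already used in the proof of 3.1), this is an $R$--injective resolution, whence $\id_R N_{\g p}\le \id_R N$. As $R$ is Noetherian, arbitrary direct sums of injectives are injective, so $\id_R\big(\bigoplus_{\g p}N_{\g p}\big)=\sup_{\g p}\id_R N_{\g p}\le \id_R N<\infty$. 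The flat analogue is proved identically: $\fd_R N_{\g p}\le\fd_R N$ because a flat $R_{\g p}$--module is flat over $R$, and direct sums of flat modules are flat.

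With these in hand the induction is routine. The base case $\id_R Z^{-1}=\id_R M<\infty$ is the hypothesis. Assuming $\id_R Z^{j-1}<\infty$, the structural description and the key lemma give $\id_R M^{j}<\infty$; then the short exact sequence $0\to Z^{j-1}\to M^{j}\to Z^{j}\to 0$ forces $\id_R Z^{j}<\infty$, since in any short exact sequence, if two of the three modules have finite injective dimension then so does the third (via the long exact $\Ext$ sequence). Running this for $j=0,1,\ldots,t$ produces $\id_R M^{j}<\infty$ for every $j$ in that range, and the parenthetical flat--dimension statement follows by repeating the argument verbatim with $\Tor$ replacing $\Ext$ and ``flat'' replacing ``injective''. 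The only point demanding care is the bookkeeping of which exactness hypotheses enter where: identifying $\coker d^{j-2}$ with $Z^{j-1}$ uses exactness at $M^{j-1}$, while forming the short exact sequence at stage $j$ uses exactness at $M^{j}$, both of which are available throughout $0\le j\le t$.
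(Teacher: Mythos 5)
Your proposal is correct and follows essentially the same route as the paper: localize a finite injective (resp.\ flat) resolution to bound $\id_R N_{\g p}$ by $\id_R N$, use Noetherianity to handle the direct sum $\bigoplus_{\g p}(\coker d^{j-2})_{\g p}$, and propagate finiteness along the short exact sequences cut from the partially exact Cousin complex. The only cosmetic difference is your explicit $Z^{j}$ notation and the isolation of the localization step as a separate lemma, which the paper carries out inline.
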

\begin{proof}
We prove the injective case. The proof of the flat case is
similar. Write $C(\mathcal{F}, M)$ as
\begin{center}
$0\to M\xrightarrow{d^{-1}} M^0\xrightarrow{d^0} M^1\to\cdots\to
M^n\xrightarrow{d^n} M^{n+1}\to\cdots$,
\end{center} \vspace{0.2cm}
where $M^n=\bigoplus_{\g p\in\partial F_n} (\coker d^{n-2})_\g p$.
\par Let $\g p\in\partial F_0$.
Then we have $\id_R M_\g p\leq \id_{R_\g p} M_\g p\leq \id_R
M<\infty$. Since $R$ is Noetherian, we have \par
\vspace{0.3cm}\hspace{2cm}$\id_R M^0=\id_R (\bigoplus_{\g p\in
\partial F_0} M_\g p)\leq sup_{\g p\in
\partial F_0} \{\id_R M_\g p\}\leq \id_R M<\infty.$
\hspace{2cm} \vspace{0.3cm}$(\ast)$  Now we can obtain the short
exact sequences \par \vspace{0.5cm} $E_1 :\hspace{1.3cm} 0
\longrightarrow M\longrightarrow M^0\longrightarrow \coker
d^{-1}\longrightarrow 0$
\par\vspace{0.2cm}
$E_2 : \hspace{1.3cm}0\to \coker d^{-1}\to M^1\longrightarrow \coker
d^0\longrightarrow 0$
\par \hspace{3.5cm} \vdots \hspace{3.3cm}\vdots
\hspace{3.35cm}
\par $E_{t-1} : \hspace{0.95cm} 0\to \coker d^{t-4}\to M^{t-2}\to
\coker d^{t-3}\to 0$
\par \vspace{0.2cm}
$E_t :\hspace{1.3cm}0\to \coker d^{t-3}\to M^{t-1}\to \coker
d^{t-2}\to 0$  \vspace{0.5cm} \par \hspace{-0.4cm}from
$C(\mathcal{F}, M)$. Therefore, $\id_R (\coker d^{-1})<\infty$ by
$(\ast)$ and $E_1$. Now let $\g p\in\partial F_1$. Thus we have
$\id_R (\coker d^{-1})_\g p \leq \id_R (\coker d^{-1})<\infty$; and
consequently, \vspace{0.3cm}\par $\id_R M^1=\id_R(\bigoplus_{\g
p\in\partial F_1} (\coker d^{-1})_\g p)\leq sup_{\g p\in\partial
F_1}\{\id_R(\coker d^{-1})_\g p\}$ \vspace{0.3cm}
\par \hspace{6cm}$\leq \id_R(\coker d^{-1}) <\infty$ \vspace{0.2cm}
\par
Now, using the exact sequences $E_2,\cdots,E_t$ and employing the
same argument as above, one can prove the assertion by induction.
\end{proof}
\begin{Prop}
Let $M$ be a G--Gorenstein $R$--module. Then $M$ is Gorenstein
whenever $\id_R M<\infty$. In particular, if $R$ is a regular
local ring, then $M$ is free.
\end{Prop}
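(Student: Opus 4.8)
The plan is to upgrade the Gorenstein injective resolution $C(M)$ to an honest injective resolution, which is precisely Definition 2.3 of a Gorenstein module. First I would record what being G--Gorenstein already supplies: by 3.15 we have $\depth_R M=\dim_R M=\Gid_R M=\depth R=d$, so $M$ is maximal Cohen--Macaulay; by \cite[2.4]{Shb} the complex $C(M)$ is exact and, $M$ being $d$--dimensional, has the finite shape $0\to M\to M^0\to\cdots\to M^d\to 0$; and, by the very definition of G--Gorenstein, each term $M^n$ is Gorenstein injective.

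The crucial point is to transport the hypothesis $\id_R M<\infty$ to every term of $C(M)$. For this I would extract from $C(M)$ the short exact sequences $0\to M\to M^0\to\coker d^{-1}\to0$ and $0\to\coker d^{i-2}\to M^i\to\coker d^{i-1}\to0$, and prove by induction on $n$ that each cokernel, hence each $M^n=\bigoplus_{\hit_M\g p=n}(\coker d^{n-2})_\g p$, has finite injective dimension over $R$. The two inputs are that over a Noetherian ring finite injective dimension passes to localizations, so $\id_R(\coker d^{n-2})_\g p\le\id_R\coker d^{n-2}$, and that the injective dimension of the relevant direct sum equals the supremum of the injective dimensions of its summands. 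I expect this to be the main obstacle, since $\Spec R$ may be infinite and one must use Noetherianness to commute the functors $\Ext_R^{\bullet}(R/\g a,-)$ past those direct sums (this is the same d\'evissage that is carried out, in greater generality, in Lemma 4.5).

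Granting both ingredients, the conclusion is immediate: each $M^n$ is simultaneously Gorenstein injective and of finite injective dimension, hence injective by \cite[10.1.2]{E-Jb}. Therefore $C(M)$ is an injective resolution of $M$, that is, $M$ is Gorenstein. If one prefers to handle a single term, the top cohomology $M^d\cong\lh_{\g m}^d(M)$, which is Gorenstein injective by the main theorem of \cite{Shc}, is then injective of finite injective dimension, and Sharp's criterion \cite[3.11]{Shb} yields the same conclusion.

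For the final assertion, suppose $R$ is regular local. Then $R$ is Gorenstein and every finitely generated $R$--module has finite injective dimension, so $\id_R M<\infty$ holds automatically and the first part shows $M$ is Gorenstein. Moreover $M$ is maximal Cohen--Macaulay, so $\depth_R M=d=\depth R$, and since $\pd_R M<\infty$ over the regular ring $R$, the Auslander--Buchsbaum formula gives $\pd_R M=\depth R-\depth_R M=0$; thus $M$ is free.
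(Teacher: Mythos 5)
Your argument for the main assertion is essentially the paper's: the paper simply cites its Lemma 4.5, which is exactly the d\'evissage you carry out inline (passing $\id_R M<\infty$ through the short exact sequences extracted from $C(M)$, using that finite injective dimension localizes and that $\id_R$ of a direct sum over a Noetherian ring is the supremum over the summands), and then concludes via \cite[10.1.2]{E-Jb} that each Gorenstein injective term of finite injective dimension is injective. One small remark: your opening appeal to 3.15 to get the maximal Cohen--Macaulay property and the finite shape of $C(M)$ is both unnecessary for this argument and, strictly speaking, outside the hypotheses of 3.15 (which is stated under the standing assumption that $R$ is Cohen--Macaulay local with a dualizing complex, whereas the first claim of the proposition is not so restricted); the induction needs only exactness of $C(M)$, which the definition of G--Gorenstein already provides. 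For the final assertion you genuinely diverge from the paper: the paper deduces from 3.9 that $M$ is Gorenstein projective over the regular (hence Gorenstein) local ring and invokes \cite[10.2.3]{E-Jb} to conclude $M$ is projective, hence free, whereas you observe that $M$ is maximal Cohen--Macaulay with $\pd_R M<\infty$ and apply the Auslander--Buchsbaum formula to get $\pd_R M=0$. Both routes are correct; yours is more elementary and avoids the Gorenstein projective machinery, while the paper's version makes the link to Proposition 3.9 explicit. Either way the conclusion stands.
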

\begin{proof}
The first part of the assertion is clear by 4.5 and
\cite[10.1.2]{E-Jb}. The last part of the assertion follows
immediately from the first part in conjunction with 3.9 and
\cite[10.2.3]{E-Jb}.
\end{proof}
The next theorem provides a characterization for Gorenstein local
rings, in terms of G--Gorenstein modules.
\begin{Thm}
Let $(R,\g m, k)$ be a Cohen--Macaulay local ring of Krull dimension
$d$. Then the following conditions are equivalent.
\begin{itemize}
\item[(i)] every maximal Cohen--Macaulay $R$--module is G--Gorenstein.
\item[(ii)] every $\g m$--torsion $R$--module is of finite Gorenstein
injective dimension.
\item[(iii)] $\Gid_R (\lh_m^d (R))<\infty$.
\item[(iv)] $R$ is Gorenstein.
\end{itemize}
\end{Thm}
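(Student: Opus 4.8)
The plan is to run the cyclic chain $(iv)\Rightarrow(i)\Rightarrow(iii)\Rightarrow(iv)$ together with the extra thread $(iv)\Rightarrow(ii)\Rightarrow(iii)$, so that the only substantial implication is $(iii)\Rightarrow(iv)$; all the others are one-line deductions from results already established. As elsewhere in this section I take $R$ to admit a dualizing complex, so that $R$ has a canonical module $\omega$, and I write $(-)^{\vee}=\Hom_R(-,E)$ for the Matlis dual, where $E=E(R/\g m)$.

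For $(iv)\Rightarrow(i)$ and $(iv)\Rightarrow(ii)$ I would argue uniformly: if $R$ is Gorenstein, then by \cite[10.1.13]{E-Jb} every $R$--module has finite Gorenstein injective dimension. In particular every $\g m$--torsion module has finite $\Gid$, which is $(ii)$; and if $M$ is maximal Cohen--Macaulay then $\Gid_R M<\infty$, so $M$ is G--Gorenstein by 3.8, which is $(i)$. For $(i)\Rightarrow(iii)$, observe that $R$ is itself a maximal Cohen--Macaulay $R$--module, hence G--Gorenstein by $(i)$; then $C(R)$ is a Gorenstein injective resolution of $R$, whose top term is $\lh_{\g m}^d(R)$ by the main theorem of \cite{Shc}, so $\lh_{\g m}^d(R)$ is Gorenstein injective and $(iii)$ holds (indeed $\Gid_R\lh_{\g m}^d(R)=0$). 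For $(ii)\Rightarrow(iii)$ there is nothing to prove, since $\lh_{\g m}^d(R)$ is $\g m$--torsion.

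The crux is $(iii)\Rightarrow(iv)$, and here I would transport the hypothesis through Matlis duality using the Bass--Auslander class machinery already recalled in 3.13. After passing to the completion — which alters neither the Gorenstein property of $R$ nor the Gorenstein injective dimension of the Artinian module $\lh_{\g m}^d(R)$ — I may assume $R$ complete, so that local duality (the main theorem of \cite{Shc} applied to $\omega$, together with \cite{B-Sh}) gives $\lh_{\g m}^d(R)\cong\omega^{\vee}$. By the functorial description of Gorenstein injective dimension \cite[6.1]{Ch-F-H}, the hypothesis $\Gid_R\omega^{\vee}<\infty$ says exactly that $\omega^{\vee}$ lies in the Bass class. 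Since $\Hom_R(-,E)$ is exact and interchanges the Tor/Ext and tensor/Hom conditions defining the two classes — concretely $\Ext_R^i(\omega,\omega^{\vee})\cong\Tor^R_i(\omega,\omega)^{\vee}$, and similarly for the remaining conditions — the Matlis dual carries the Bass class into the Auslander class, so $\omega$ lies in the Auslander class. The plan then finishes by invoking the characterization that the canonical module lies in the Auslander class only when $R$ is Gorenstein (equivalently $\Gfd_R\omega<\infty$ forces $R$ to be Gorenstein, cf. \cite{Ch-F-H} and \cite{E-J-X}).

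I expect this last conversion — from membership of $\omega^{\vee}$ in the Bass class to the Gorenstein property of $R$ — to be the real obstacle, since it is the only step that uses the special nature of $\lh_{\g m}^d(R)$ rather than soft formalities (a generic $\g m$--torsion module such as $E$ has finite $\Gid$ over any ring, so the argument must genuinely exploit $\lh_{\g m}^d(R)\cong\omega^{\vee}$). Everything else is routine, and once $(iv)$ is reached the loop $(iv)\Rightarrow(i)\Rightarrow(iii)\Rightarrow(iv)$ together with $(iv)\Rightarrow(ii)\Rightarrow(iii)$ closes up all four equivalences.
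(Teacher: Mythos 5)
Your proof is logically complete and the easy implications match the paper's, but your argument for the crucial step (iii)$\Rightarrow$(iv) is genuinely different from the one in the text. The paper stays entirely inside the Cousin-complex framework: since $R$ is Cohen--Macaulay, $C(R)$ is exact, so Lemma 4.5 (applied with $\fd_R R=0$) shows that every term of $C(R)$ --- in particular the top term, which is $\lh_{\g m}^d(R)$ by the main theorem of \cite{Shc} --- has finite flat dimension; combining $\fd_R(\lh_{\g m}^d(R))<\infty$, the hypothesis $\Gid_R(\lh_{\g m}^d(R))<\infty$, and $\Hom_R(k,\lh_{\g m}^d(R))\neq 0$, Holm's theorem \cite[3.3]{Ha} gives Gorensteinness in one stroke. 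Your route instead goes through local duality and Foxby equivalence: $\lh_{\g m}^d(R)\cong\omega^{\vee}$, finite $\Gid$ puts $\omega^{\vee}$ in the Bass class, Matlis duality puts $\omega$ in the Auslander class, and the Foxby--Christensen theorem (a dualizing module of finite G--dimension forces Gorenstein; equivalently $R\in\mathcal{B}(R)$ iff $R$ is Gorenstein, which is the same \cite[2.1]{Ha} the paper uses elsewhere) finishes. Both endgames are correct, but the paper's is cheaper: it needs no dualizing complex, no completion, and no Bass/Auslander formalism --- just the already-proved Lemma 4.5. Note that Lemma 4.5 exists in the paper precisely to serve this proof (and 4.7, 4.9), so the intended argument is the flat-dimension one.

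The one step of yours that is asserted rather than proved is the reduction to the complete case: you need that $\Gid_R(\lh_{\g m}^d(R))<\infty$ implies $\Gid_{\widehat{R}}(\lh_{\widehat{\g m}}^d(\widehat{R}))<\infty$. Since $\lh_{\g m}^d(R)$ is not finitely generated, this is not covered by the routine finite base-change statements; it does follow from the ascent results for Gorenstein injective dimension along flat local homomorphisms with Gorenstein closed fibre in \cite{Ch-H} (or \cite{Ch-F-H}), but you should cite that explicitly rather than treat it as harmless. Without some such input, your (iii)$\Rightarrow$(iv) only applies to rings admitting a dualizing module, which is a genuinely weaker statement than the theorem as posed. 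Everything else --- the identification $\Gid<\infty\Leftrightarrow$ Bass class over a ring with dualizing complex, the duality $\Ext_R^i(\omega,\omega^{\vee})\cong\Tor_i^R(\omega,\omega)^{\vee}$, and $\omega^{\vee\vee}\cong\omega$ over the completion --- is sound.
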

\begin{proof}
(i)$\Rightarrow$ (iv). Since $R$ itself is a maximal Cohen--Macaulay
$R$--module, we have $\Gid_R R<\infty$. Therefore, $R$ is Gorenstein
by \cite[2.1]{Ha}. (iv) $\Rightarrow$ (i). This is immediate by
\cite[10.1.13]{E-Jb} and 3.8. (ii) $\Rightarrow$ (iii) is clear by
the fact that $\lh_\g m^d (R)$ is $\g m$--torsion. (iii)
$\Rightarrow$ (iv). Since $R$ is Cohen--Macaulay, the complex $C(R)$
is exact by \cite[4.7]{Sha}. Hence, in view of the main theorem of
\cite{Shc} and 4.5, we have $\fd_R (\lh_\g m^d (R))<\infty$. On the
other hand, we see that $\Hom_R(k, \lh_\g m^d (R))\neq0$. Therefore,
the result follows from \cite[3.3]{Ha}. (iv) $\Rightarrow$ (ii) is
clear by \cite[10.1.13]{E-Jb}.
\end{proof}
The following theorem provides a characterization for regular
local rings.
\begin{Thm}
Let $R$ be a Gorenstein local ring. Then the following conditions
are equivalent.
\begin{itemize}
\item[(i)] every Gorenstein flat $R$--module is flat.
\item[(ii)] every balanced big Cohen--Macaulay $R$--module is
of finite flat dimension.
\item[(iii)] every G--Gorenstein $R$--module is Gorenstein.
\item[(iv)] $R$ is regular.
\end{itemize}
\end{Thm}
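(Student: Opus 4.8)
The plan is to test (iv) against each of the other three conditions and, for the three converse implications, to funnel everything through a single cleverly chosen module. First I would dispose of the soft directions where regularity is the hypothesis. For (iv)$\Rightarrow$(ii): a regular local ring of dimension $d$ has finite global dimension $d$, so $\fd_R N\le\pd_R N\le d<\infty$ for \emph{every} $R$-module $N$, in particular for every balanced big Cohen--Macaulay module. For (iv)$\Rightarrow$(iii): again $\id_R N\le d<\infty$ for every module, so a G--Gorenstein module automatically has finite injective dimension and is therefore Gorenstein by 4.6. For (iv)$\Rightarrow$(i): a Gorenstein flat module $M$ has $\Gfd_R M=0$, while $\fd_R M\le d<\infty$ over the regular ring; the comparison $\Gfd_R M=\fd_R M$ valid for modules of finite flat dimension (\cite{Hb}) then forces $\fd_R M=0$, i.e. $M$ is flat.

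For the converses I would fix once and for all the test module $M=\Omega^d_R k$, a $d$-th syzygy of the residue field $k$ in a minimal free resolution, and record its many faces. Since $R$ is Gorenstein it admits a dualizing complex and is Cohen--Macaulay, so the results of Section 3 apply; since $\depth_R k=0$ and $\dim R=d$, the module $M$ is a non-zero maximal Cohen--Macaulay $R$-module, hence (being finitely generated) a balanced big Cohen--Macaulay module. Because $R$ is Gorenstein, $M$ is Gorenstein projective by \cite[11.5.4]{E-Jb}, hence Gorenstein flat by \cite{Hb}, and it is G--Gorenstein by 3.8 (note $\Gid_R M<\infty$ by \cite[10.1.13]{E-Jb}). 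The crucial observation is that, over the Gorenstein ring $R$, the conditions ``$M$ is free'', ``$\pd_R M<\infty$'', ``$\id_R M<\infty$'', ``$M$ is flat'' and ``$M$ is a Gorenstein module'' all coincide with one another and with the regularity of $R$: indeed $\pd_R M<\infty$ if and only if $\pd_R k<\infty$ if and only if $R$ is regular; over a Gorenstein local ring finite projective and finite injective dimension are equivalent; a finitely generated flat module over a local ring is free; and by Auslander--Buchsbaum a maximal Cohen--Macaulay module of finite projective dimension is free.

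With these equivalences in hand the three converses each reduce to one line through $M=\Omega^d_R k$. For (iii)$\Rightarrow$(iv): $M$ is G--Gorenstein, so by (iii) it is a Gorenstein module, whence $\id_R M<\infty$ and $R$ is regular. For (ii)$\Rightarrow$(iv): $M$ is balanced big Cohen--Macaulay, so by (ii) $\fd_R M<\infty$, hence $\pd_R M<\infty$ and $R$ is regular. For (i)$\Rightarrow$(iv): $M$ is Gorenstein flat, so by (i) it is flat, hence free, and $R$ is regular. I expect the main obstacle to lie not in any individual implication but in assembling the correct web of Gorenstein homological facts—the passage ``Gorenstein projective $\Rightarrow$ Gorenstein flat'' and the comparison $\Gfd_R=\fd_R$ in the finite case, together with the Gorenstein-ring interchange of projective and injective dimension—and in verifying that the single syzygy module genuinely carries the maximal Cohen--Macaulay, Gorenstein projective/flat and G--Gorenstein structures at once, so that it can simultaneously test all four conditions.
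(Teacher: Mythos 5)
Your proof is correct, but it is organized quite differently from the paper's. The paper proves the single cycle (i)$\Rightarrow$(ii)$\Rightarrow$(iii)$\Rightarrow$(iv)$\Rightarrow$(i): for (i)$\Rightarrow$(ii) it resolves a balanced big Cohen--Macaulay module by Gorenstein flats via \cite[10.3.13]{E-Jb}; for (ii)$\Rightarrow$(iii) it passes from finite flat to finite injective dimension by \cite[9.1.10]{E-Jb} and then uses Lemma 4.5 to see that every term of $C(M)$ has finite injective dimension, hence is injective by \cite[10.1.2]{E-Jb}; and for (iii)$\Rightarrow$(iv) it quantifies over \emph{all} maximal Cohen--Macaulay modules and invokes Lemma 3.10, i.e.\ the Auslander--Buchweitz Cohen--Macaulay approximation of $k$. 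You instead adopt a hub-and-spoke design, proving each of (i), (ii), (iii) equivalent to (iv) separately, with the three converses all funnelled through the single syzygy $M=\Omega^d_R k$. This buys you something real: you never need the Cohen--Macaulay approximation of Lemma 3.10, since finiteness of $\pd_R\Omega^d_Rk$ detects regularity directly, and your (i)$\Rightarrow$(iv) is a genuinely new implication the paper only obtains by composing three others. The cost is that you must separately establish the easy implications (iv)$\Rightarrow$(ii) and (iv)$\Rightarrow$(iii) (free from the cycle in the paper, but trivial over a regular ring), and you lean on a few standard facts that deserve explicit citations: the depth lemma giving $\depth\Omega^d_Rk\geq d$ so that the syzygy is maximal Cohen--Macaulay; the implication ``finitely generated Gorenstein projective $\Rightarrow$ Gorenstein flat'' over a Noetherian ring of finite Krull dimension (Holm \cite{Hb}); and the equivalence of finite projective, injective and flat dimension over an Iwanaga--Gorenstein ring, which is exactly the \cite[9.1.10]{E-Jb} the paper itself cites. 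None of these is a gap; your argument is sound and arguably more elementary in its avoidance of approximation theory.
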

\begin{proof}
(i) $\Rightarrow$(ii). Let $M$ be a balanced big Cohen--Macaulay
$R$--module. Then, by \cite[10.3.13]{E-Jb}, $M$ has finite
Gorenstein flat dimension. Therefore, in view of the hypothesis,
$\fd_R M<\infty$. (ii) $\Rightarrow$ (iii). Let M be a
G-Gorenstein $R$--module. Then $M$ is a balanced big
Cohen--Macaulay $R$--module by 3.14; so that $\fd_R M<\infty$.
Hence, by \cite[9.1.10]{E-Jb}, $\id_R M<\infty$. Therefore the
terms of $C(M)$ have finite injective dimension by 4.5; and hence
they are injective by \cite[10.1.2]{E-Jb}. Thus $C(M)$ is an
injective resolution for $M$; and hence $M$ is Gorenstein. (iii)
$\Rightarrow$ (iv). Let $M$ be a maximal Cohen--Macaulay
$R$--module. As $R$ is Gorenstein, $M$ is G--Gorenstein by 3.8.
Thus, in view of (iii), $M$ is Gorenstein; so that $\id_R
M<\infty$. Therefore, since $M$ is an arbitrary maximal
Cohen--Macaulay $R$--module, the claim follows from 3.10. (iv)
$\Rightarrow$ (i). Assume that $M$ is a Gorenstein flat
$R$--module. Since $R$ is a regular local ring, it has finite
global dimension. Hence $fd_R M<\infty$. Then, by
\cite[10.3.4]{E-Jb}, $M$ is flat.
\end{proof}
\begin{Thm}
Let $(R,\g m)$ be a regular local ring of dimension $d$ and let $M$
be a balanced big Cohen--Macaulay $R$-module. Then
\begin{itemize}
\item[(i)] $C(\mathcal{D}(R), M)$ is an injective resolution for $M$
and $\lh_\g m^d (M)$ is injective.
\item[(ii)] If \hspace{0.1cm} $d\leq 2$ and $\g a$ is a non-zero
ideal of $R$, then $\lh_\g a^d (M)$ is injective.
\end{itemize}
\end{Thm}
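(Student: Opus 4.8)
For part (i), the plan is to upgrade the Gorenstein injective resolution already supplied by 4.4 into an honest injective resolution. Since a regular local ring is Gorenstein, 4.4 shows that $C(\mathcal{D}(R),M)$ is exact and that every term $M^n$ is Gorenstein injective, so only termwise injectivity remains to be proved. First I would use that $R$ has finite global dimension $d$, whence $\fd_R M<\infty$; applying the flat case of 4.5 to the exact complex $C(\mathcal{D}(R),M)$ then gives $\fd_R M^n<\infty$ for all $n$. Because $R$ is Gorenstein, \cite[9.1.10]{E-Jb} converts finite flat dimension into finite injective dimension, so $\id_R M^n<\infty$. Each $M^n$ is now simultaneously Gorenstein injective and of finite injective dimension, hence injective by \cite[10.1.2]{E-Jb}; thus $C(\mathcal{D}(R),M)$ is an injective resolution of $M$. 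Finally, \cite[1.8]{Shd} identifies $\lh_\g m^d(M)$ with the top term $M^d$, which we have just shown to be injective, yielding the injectivity of $\lh_\g m^d(M)$.

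For part (ii) I would argue according to the radical of $\g a$. If $\sqrt{\g a}=\g m$, then $\lh_\g a^i(-)=\lh_\g m^i(-)$ for all $i$, so $\lh_\g a^d(M)=\lh_\g m^d(M)$ is injective by (i). This at once disposes of the cases $d=0$ (where $R$ is a field and the only non-zero ideal is $R$, so $\lh_\g a^d(M)=0$) and $d=1$ (where every non-zero ideal of the discrete valuation ring $R$ is $\g m$-primary, forcing $\sqrt{\g a}=\g m$).

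The decisive case is $d=2$ with $\sqrt{\g a}\neq\g m$, and this is exactly where the hypothesis $d\leq 2$ enters. Here I would invoke that a two-dimensional regular local ring is a unique factorization domain. Since $\g a\neq 0$ and $\sqrt{\g a}\neq\g m$, each minimal prime over $\g a$ has height one and is therefore principal, say $(f_1),\ldots,(f_k)$; since a prime contains $f_1\cdots f_k$ precisely when it contains one of these, we get $\sqrt{\g a}=\sqrt{(f)}$ with $f=f_1\cdots f_k$. As local cohomology depends only on the radical and is computed by the \v{C}ech complex on the single element $f$, which is concentrated in degrees $0$ and $1$, it follows that $\lh_\g a^i(M)=0$ for all $i\geq 2$; in particular $\lh_\g a^d(M)=\lh_\g a^2(M)=0$ is (trivially) injective. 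The main obstacle is precisely this case: the role of $d\leq 2$ is to keep the arithmetic rank of a non-$\g m$-primary ideal strictly below $d$, forcing the top local cohomology to vanish. For $d\geq 3$ a height-$(d-1)$ ideal need not be set-theoretically generated by fewer than $d$ elements, so $\lh_\g a^d(M)$ may be a non-zero, non-injective module and this argument genuinely breaks down.
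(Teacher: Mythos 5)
Your part (i) is correct and is essentially the paper's own argument: the paper likewise starts from Corollary 4.4, notes that $\id_R M<\infty$ because $R$ is regular, applies Lemma 4.5 to get finite injective dimension for every term of $C(\mathcal{D}(R),M)$, and concludes with \cite[10.1.2]{E-Jb}; your detour through the flat case of 4.5 followed by \cite[9.1.10]{E-Jb} is an inessential variation. In part (ii) you genuinely diverge. The paper first treats a projective module $P$: it deduces that $\lh_{\g a}^d(P)$ is Gorenstein injective from \cite[3.4.10]{B-Sh} and \cite[2.6]{S} (this is where the hypothesis $d\leq 2$ enters for the paper, via Sazeedeh's theorem), hence injective by \cite[10.1.2]{E-Jb}; it then invokes \cite[9.1.8]{B-H} to see that a balanced big Cohen--Macaulay module over a regular local ring is flat, writes $M$ as a direct limit of projectives, and finishes using that local cohomology commutes with direct limits and that direct limits of injectives over a Noetherian ring are injective. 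Your argument instead splits on whether $\sqrt{\g a}=\g m$: if so, $\lh_{\g a}^d(M)=\lh_{\g m}^d(M)$ is injective by (i); if not (which forces $d=2$), the fact that a regular local ring is a UFD lets you replace $\g a$ up to radical by a principal ideal, so the \v{C}ech complex kills $\lh_{\g a}^2(M)$. This is correct and considerably more elementary: it needs neither Sazeedeh's result nor the flatness of $M$, and it exposes that the only nontrivial case of (ii) is the $\g m$-primary one, where the statement reduces to (i). Your dichotomy in fact extends beyond $d\leq 2$: by the Hartshorne--Lichtenbaum vanishing theorem (see \cite{B-Sh}) together with the fact that $\lh_{\g a}^d$ commutes with direct limits, one has $\lh_{\g a}^d(N)=0$ for \emph{every} module $N$ over a regular local ring once $\sqrt{\g a}\neq\g m$, so the restriction $d\leq 2$ in (ii) is not actually needed. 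What the paper's longer route buys is the intermediate statement that $\lh_{\g a}^d(P)$ is injective for projective $P$, which has some independent interest, but as a proof of the stated theorem your approach is cleaner.
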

\begin{proof}
(i) Since $R$ is Gorenstein, $C(\mathcal{D}(R), M)$ is a
Gorenstein injective resolution for $M$ and $\lh_\g m^d (M)$ is a
Gorenstein injective module by 4.4. Hence, the assertion follows
from 4.5, \cite[10.1.2]{E-Jb} and the fact that $\id_R M<\infty$
by regularity of $R$. \par (ii) Let $P$ be a projective
$R$--module. Then $\lh_\g a^d (P)$ is Gorenstein injective in
view of \cite[3.4.10]{B-Sh} and \cite[2.6]{S}; so that, since $R$
is regular, it is an injective $R$--module by \cite[10.1.2]{E-Jb}.
Now let $M$ be a balanced big Cohen--Macaulay $R$--module. Then
$M$ is flat by \cite[9.1.8]{B-H}; and hence it is the direct
limit of a family of projective $R$--modules. Therefore the
assertion follows from \cite[3.4.10]{B-Sh} and the fact that $R$
is Noetherian.
\end{proof}


\begin{thebibliography}{99}

\bibitem{A-B} M. Auslander and R. O. Buchweitz, {\it The homological theory of
 Cohen-Macaulay approximations}, Mem. Soc. Math. de France \textbf {38} (1989),
 5--37.
\bibitem{B} H. Bass, \emph{Injective dimension in Noetherian rings}, Trans.
 Amer. Math. Soc. \textbf{102} (1962) 18--29
\bibitem{B-Sh}
 M. P. Brodmann and R. Y. Sharp , \emph{Local cohomology : an
 algebraic introduction with geometric applications},Cambridge University press,
 Cambridge,1998.
\bibitem{B-H}
 W. Bruns and J. Herzog, \emph{Cohen-Macaulay rings},
 Cambridge University Press, Cambridge, 1993.
\bibitem{Ch}
 L. W. Christensen, \emph{Gorenstein dimensions}, Lecture Notes in
 Math., vol.1747, Springer-Verlag, Berlin, 2000.
\bibitem{Ch-F-F}
 L. W. Christensen, H.B. Foxby and A. Frankild, \emph{Restricted homological dimensions and
 Cohen--Macaulayness}, J. Algebra 251 (2002) 479--502.
\bibitem{Ch-F-H}
 L. W. Christensen, A. Frankild and H. Holm, \emph{On Gorenstein
 projective, injective and flat dimensions---A functorial description with
 applications}, J. Algebra \textbf{302}, (2006), 231--279
\bibitem{Ch-H}
 L. W. Christensen and H. Holm, \emph{Ascent properties of Auslander
categories}, preprint, 2005.
\bibitem{E-Ja}
 E. E. Enochs and O. M. G. Jenda, \emph{Gorenstein injective and
 projective modules}, Math. Z. \textbf{220} (1995), no. 4, 611--633.
\bibitem{E-Jb}
 E. E. Enochs and O. M. G. Jenda, \emph{Relative homological
 algebra}, de Gruyter, Berlin, 2000.
\bibitem{E-J-T}
 E. E. Enochs, O. M. G. Jenda and B. Torrecillas, \emph{Gorenstein
 flat modules}, Nanjing Daxue Xuebao Shuxue Bannian Kan \textbf{10}
 (1993), no. 1, 1--9.
\bibitem{E-J-X}
 E. E. Enochs, O. M. G. Jenda and J. Xu, \emph{Foxby duality and
 Gorenstein injective and projective modules}, Trans. Amer. Math. Soc.
 \textbf{348} (1996) 3223--3234.
\bibitem{H}
 R. Hartshorne, \emph{Residues and duality, Lecture notes of a seminar
 on the work of A. Grothendieck, given at Harvard 1963/64, with an appendix
 by P. Delinge, Lecture notes in Math.}, vol20, Speringer-Verlag, Berlin, 1966.
\bibitem{Ha}
 H. Holm, \emph{Rings with finite Gorenstein injective dimension},
 Proc. American Math. Soc.  \textbf{132},(2003), 5 , (1279--1283).
\bibitem{Hb}
 H. Holm, \emph{Gorenstein honological dimensions},
 J. Pure and Appl. Algebra  \textbf{189}, (2004), 167--193.
\bibitem{M}
 H. Matsumura, \emph{Commutative ring theory}, Cambridge University
 press, Cambridge, 1990.
\bibitem{R-G}
 M. Raynaud and L. Gruson, \emph{Crit\`{e}res de platitude et de
 projectivit\'{e}. Techniques de �platification� d' un module}, Invent. Math.
 \textbf{13}, (1971) 1--89
\bibitem{R}
 J. J. Rotman, \emph{An introduction to homological algebra},
 Academic Press, Inc. (1979)
\bibitem{S}
R. Sazeedeh, \emph{Gorenstein injective modules and local
 cohomology},Proc. American Math. Soc. \textbf{132} (10)(2004)2885--2891.
\bibitem{Sha}
 R. Y. Sharp,\emph{The Cousin complex for a module over a commutative
 Noetherian ring}, Math. Z. \textbf{112}, (1969), 340--356.
\bibitem{Shb}
 R. Y. Sharp, \emph{Gorenstein modules}, Math. Z.
 \textbf{115} (1970), 117--139.
\bibitem{Shc}
 R. Y. Sharp, \emph{Local cohomology and the Cousin complex for a
 commutative Noetherian ring}, Math. Z. \textbf{153} (1977), 19--22.
\bibitem{Shd}
 R. Y. Sharp, \emph{A Cousin complex  charactrization of balanced big
 Cohen--Macaulay modules}, Quart. J. Math. Oxford, \textbf{33} (1982), 471--485.
\bibitem{X}
 J. Xu, \emph{Flat covers of modules}, Lecture Notes in Math., vol. 1634,
 Springer-Verlag, New York, 1996.
\end{thebibliography}
\end{document}